\renewcommand{\div}{\operatorname{div}}
\newcommand{\loc}{\mathrm{loc}}
\newcommand{\R}{\mathbb{R}}
\newcommand{\N}{\mathbb{N}}
\newcommand{\K}{\mathcal{K}}
\renewcommand{\d}{\,\textup{d}}
\newcommand{\dist}{\operatorname{dist}}
\renewcommand{\phi}{\varphi} 
\renewcommand{\epsilon}{\varepsilon} 
\renewcommand{\Re}{\operatorname{Re}}
\newtheorem{theorem}{Theorem}[section]
\newtheorem{lemma}[theorem]{Lemma}
\newtheorem{proposition}[theorem]{Proposition}
\theoremstyle{remark}
\newtheorem{remark}[theorem]{Remark}
\theoremstyle{definition}
\numberwithin{equation}{section}
\title[Contact of a thin free boundary with a fixed one]{Contact of a
  thin free boundary\\with a fixed one in the Signorini problem}
\author{Norayr Matevosyan}
\address{Department of Mathematics, University of Texas at Austin,
  Austin, TX 78712, USA}
\email{nmatevosyan@math.utexas.edu}
\thanks{N.M. was supported by Award No.~KUK-I1-007-43, made by King Abdullah University of Science and Technology (KAUST)}
\author{Arshak Petrosyan}
\address{Department of Mathematics, Purdue University, West Lafayette,
IN 47907, USA}
\email{arshak@math.purdue.edu}
\thanks{A.P. was supported in part by  NSF grant DMS-1101139.}
\subjclass[2000]{Primary 35R35}
\keywords{Signorini problem, thin obstacle problem, thin free
  boundary, optimal regularity, contact with fixed boundary, Almgren's frequency formula}
\begin{document}
\begin{abstract}
  We study the Signorini problem near a fixed boundary, where the
  solution is ``clamped down'' or ``glued.''  We show that in general
  the solutions are at least $C^{1/2}$ regular and that this regularity is
  sharp. We prove that near the actual points of contact of the free
  boundary with the fixed one the blowup solutions must have
  homogeneity $\kappa\geq 3/2$, while at the non-contact points the
  homogeneity must take one of the values: $1/2$, $3/2$, \ldots,
  $m-1/2$, \ldots.
\end{abstract}

\maketitle

\section{Introduction and Main Results}
\label{sec:intr-main-result}

\subsection{The Signorini problem}
\label{sub:the_problem}

The purpose of this paper is to study the behavior of the thin free
boundary as it approaches to the fixed boundary in the so-called
(scalar) \emph{Signorini problem} (also know as the \emph{thin
  obstacle problem}).

The Signorini problem consists in minimizing the Dirichlet energy
functional
\begin{equation}
  J(v):=\int_{B_1^+} |\nabla v|^2
  \label{eq:energy}
\end{equation}
on a closed convex set
\begin{equation}
  \label{eq:K}
  \K=\K(g):=\{v\in W^{1,2}(B_1^+): v=g\text{ on } (\partial B_1)^+,\ 
  v\geq 0\text{ on } B_1'\},
\end{equation}
for a given function $g\in L^2((\partial B_1)^+)$.  Here and
everywhere in the paper we use the following notations:
\begin{gather*}
  B_r(x):= \{y\in \R^n :|x-y|<r\},\quad B_r:= B_r(0),\\
  E^+:= E \cap \{x_n>0 \},\quad E':= E \cap \{ x_n=0 \},
\end{gather*}
for a subset $E\subset \R^n$. We assume $n\geq 2$.
Using direct methods of calculus of variation one
can verify that a minimizer $u\in \K$ exists and satisfies the
following variational inequality:
\begin{equation} \label{eq:var-ineq} \int_{B_1^+} \nabla u \nabla (v-u)
  \geq 0 \quad\text{for any }v\in \K.
\end{equation}
The problem above goes back to the foundational paper \cite{LS} on
variational inequalities. It is has been known for quite some time
that the minimizers are in the class $C^{1,\alpha}(B_1^+\cup B_1')$
for some $\alpha>0$ (see \cite{Ca1} and also \cite{U1}) and even
$C^{1,1/2}(B_1^+\cup B_1')$ in the dimension $n=2$, see
\cite{Ri}. Besides, the minimizers satisfy
\begin{align*}
  \Delta u=0&\quad\text{in }B_1^+\\
  u\geq 0,\quad -\partial_{x_n} u\geq 0,\quad
  u\partial_{x_n}u=0&\quad\text{on }B_1'.
\end{align*}
The latter are known as the \emph{Signorini} or \emph{complementarity
  boundary conditions}.

The problem features the following apriori unknown subsets of $B_1'$:
\begin{align*}
  &\Lambda(u):=\{x\in B_1': u=0\}&& \text{\emph{the coincidence set}}\\
  & \Omega(u):=\{x\in B_1': u>0\}&& \text{\emph{the non-coincidence set}}\\
  & \Gamma(u):=\partial_{B_1'}\Omega(u) &&\text{\emph{the free
      boundary.}}
\end{align*}
The study of the geometric and analytic properties of the free
boundary is one of the objectives of the Signorini problem. Sometimes
it is said that the free boundary $\Gamma(u)\subset B_1'$ is
\emph{thin}, to indicate that it is (expected to be) of dimension
$(n-2)$.

Recent years have seen some interesting new developments in the
problem, starting with the proof in \cite{AC} that the minimizers $u$
are in the class $C^{1,1/2}(B_1^+\cup B_1')$, in any dimension $n\geq
2$, which is the optimal regularity. This opened up the possibility of
studying the free boundary $\Gamma(u)$, which has been done in
\cites{ACS,CSS,GP}, see also \cite[Chapter~9]{PSU}. An effective tool
in the study of the free boundary is \emph{Almgren's frequency
  formula}
$$
N^x(r,u):=\frac{r\int_{B_r^+(x)} |\nabla u|^2}{\int_{(\partial B_r)^+}
  u^2}.
$$
It originated in the work of Almgren on multi-valued harmonic
functions \cite{Alm} and has an important property of being monotone
in $r$, even for solutions of the Signorini problem. One then
classifies the free boundary points according to the value
$$
\kappa:=N^x(0+,u).
$$
It is known that $\kappa\geq 3/2$ for $x\in\Gamma(u)$ in the Signorini problem and more precisely $\kappa=3/2$ or
$\kappa\geq 2$ \cite{ACS}. This results in a decomposition
$$
\Gamma(u)=\Gamma_{3/2}(u)\cup\bigcup_{\kappa\geq2}\Gamma_\kappa(u),\quad
\text{where }\Gamma_\kappa(u):=\{x\in\Gamma(u):N^x(0+,u)=\kappa\}.
$$
The set $\Gamma_{3/2}(u)$ is known as the \emph{regular set}. It has been
recently shown that $\Gamma_{3/2}(u)$ is real analytic \cite{KPS} by
using a partial hodograph-Legendre transform from $C^{1,\alpha}$
regularity proved in \cite{ACS}. See also \cite{DS1} for a different
proof of $C^\infty$ regularity, based on a generalization of the
boundary Harnack principle. The only other free boundary points
studied in the literature 
are the ones in $\Gamma_{2m}(u)$, $m\in\N$ which correspond to
the points where the coincidence set $\Lambda(u)$ has a zero $H^{n-1}$
density, see \cite{GP}. Such points are known as \emph{singular
  points}. It was proved in \cite{GP} that $\Gamma_{2m}(u)$ is
contained in a countable union of $C^1$ manifolds.

An interesting question is finding all possible values for
$\kappa=N^x(0+,u)$. In dimension $n=2$ the answer to that question is
known (proof is a simple exercise): $\kappa$ must be one of the
following values:
$$
3/2,\ 2,\ 7/2,\ 4,\ \ldots,\ 2m-1/2,\ 2m,\ \ldots.
$$
However, this is still an open problem in dimensions $n\geq 3$.

\subsection{Contact of the free and fixed boundaries} The objective in
this paper is the study of the behavior of the free boundary
$\Gamma(u)$ in the Signorini problem as it approaches a set where $u$
is forced to be zero. More precisely, consider a closed subset $\K_0$
of the set $\K$ in \eqref{eq:K}, defined by
\begin{equation}
  \label{eq:K0}
  \K_0=\K_0(g):=\{v\in \K(g): v=0\text{ on } B_1'\cap\{x_1\leq 0\}\}
\end{equation}
\begin{figure}[t]
  \centering
  \begin{picture}(150,150)
    \put(0,0){\includegraphics[height=150pt]{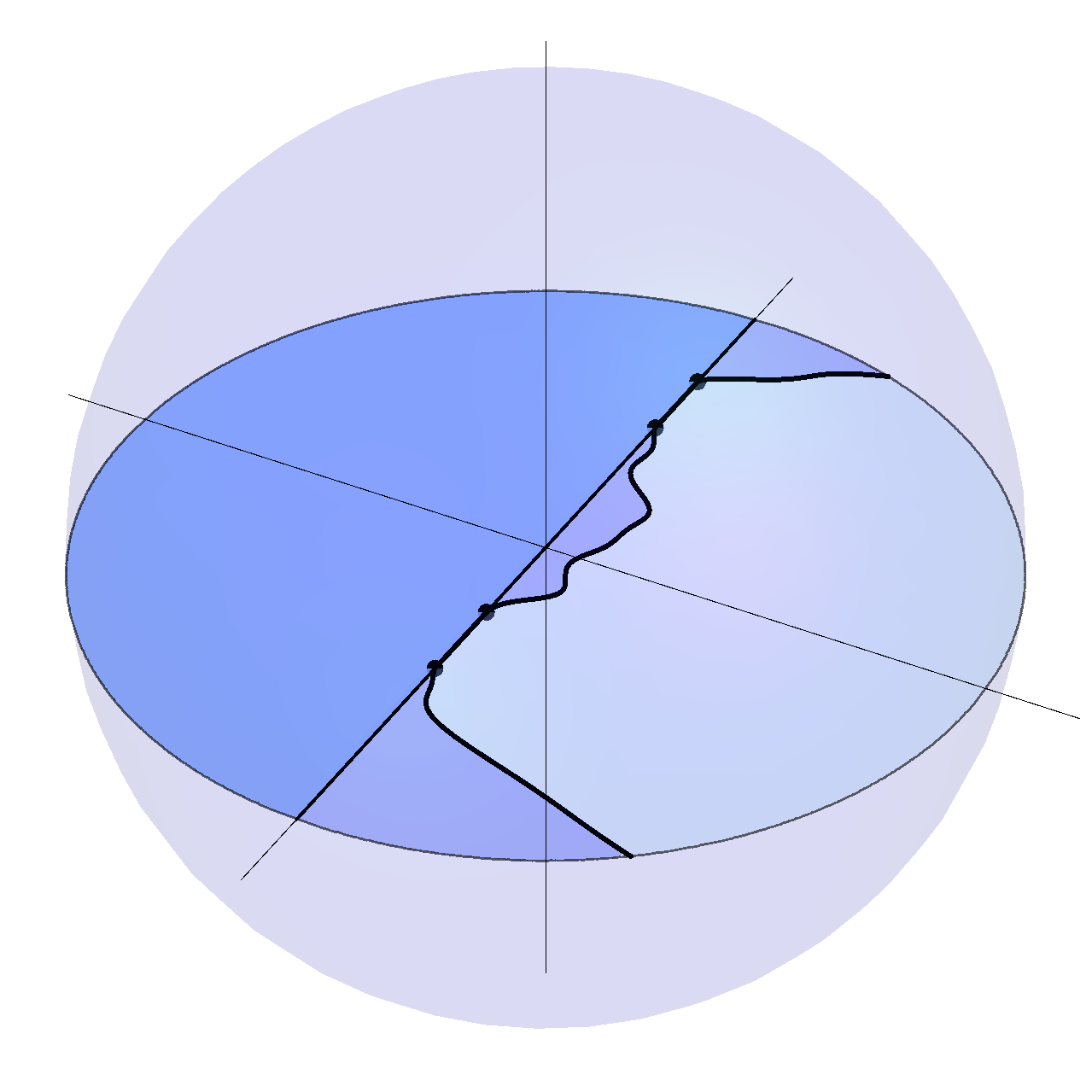}}
    \put(79,130){\footnotesize $x_n$} \put(140,43){\footnotesize
      $x_1$} \put(34,43){\footnotesize $\Pi$}
    \put(46,72){\footnotesize $\Gamma'$} \put(70,94){\footnotesize
      $\Gamma^*$} \put(77,43){\footnotesize $\Gamma$}
    \put(20,75){$\Lambda$} \put(110,75){$\Omega$}
    \put(49,69){\vector(1,-1){10}} \put(53,73){\vector(2,-1){13}}
    \put(78,95){\vector(1,0){16}}
  \end{picture}
  \caption{Free boundary $\Gamma $ near the contact points $\Gamma'$
    with the fixed boundary $\Pi $ considered in the hyperplane
    $\{x_n=0\}$.}
  \label{fig:contact}
\end{figure}%
and minimize the Dirichlet energy $J$ in \eqref{eq:energy} over
$\K_0$. That is, compared to the Signorini problem, we have an
additional constraint that the functions must vanish on
$B_1'\cap\{x_1\leq 0\}$.  If we think of the solution of the Signorini
problem as an elastic membrane that is forced to stay above zero in
$B_1'$, the new constraint in $\K_0$ can be thought of as ``clamping
down'' or ``gluing'' the membrane on $B_1'\cap\{x_1\leq 0\}$. The
boundary of the latter set in $B_1'$ is
$$
\Pi:=\{x_1=0,x_n=0\},
$$
which we call the \emph{fixed boundary}. Note that the coincidence set
$\Lambda(u)$ will contain now $B_1'\cap\{x_1\leq 0\}$ and the truly
free part of $\Gamma(u)$ is $\Gamma(u)\cap\{x_1>0\}$. The points in
$$
\Gamma'(u):=\overline{\Gamma(u)\cap\{x_1>0\}}\cap \Pi
$$
are categorized as \emph{contact points}, and the ones in
$$
\Gamma^*(u):=(\Gamma(u)\cap\Pi)\setminus \Gamma'(u)
$$
are \emph{non-contact points}, see Fig.~\ref{fig:contact}. We note that the minimizers in $\K_0$
still solve the Signorini problem in small halfballs $B_r^+(x_0)$ with
$x_0\in B_1'\cap\{x_1>0\}$ and therefore we will have that $u\in
C_{\loc}^{1,1/2}(B_1^+\cup (B_1'\cap\{x_1>0\}))$ and that it satisfies
\begin{align*}
  \Delta u=0&\quad\text{in }B_1^+\\
  u=0&\quad\text{on }B_1'\cap \{x_1\leq 0\}\\
  u\geq 0,\quad -\partial_{x_n} u\geq 0,\quad
  u\partial_{x_n}u=0&\quad\text{on }B_1'\cap\{x_1>0\} .
\end{align*}
There are many papers in the literature dealing with the contact of
the free and fixed boundaries in various free boundary problems. The
case of the classical obstacle problem, for instance, was studied by
\cites{U2,AU}. We also refer to \cite[Chapter~8]{PSU} and references
therein for some of these results, including also extensions to other
obstacle-type problems.

In contrast to the case of the classical obstacle problem, where the
presence of the fixed boundary actually helps -- for instance, to
avoid a geometric ``thickness'' condition on coincidence set needed
for the regularity of the free boundary -- in the Signorini problem
the presence of the fixed boundary introduces a serious handicap.
Indeed, as we have mentioned earlier, the optimal regularity of the
Signorini problem is $C^{1,1/2}$. This regularity is exhibited by
the following explicit solution:
\begin{equation} \label{eq:reZ3/2} \hat
  u_{3/2}(x):=\Re(x_1+i|x_n|)^{3/2}.
\end{equation}
On the other hand, it is easy to see that
\begin{equation} \label{eq:reZ} \hat
  u_{1/2}(x):=\Re(x_1+i|x_n|)^{1/2}
\end{equation}
is a minimizer of $J$ over $\K_0$ (simply because it is harmonic in
$B_1\setminus (B_1'\cap\{x_1\leq 0\})$), thus limiting the generally
expected regularity of minimizers of $J$ to at most
$C^{1/2}$. (See Fig.~\ref{fig:REZ} for the illustration of these solutions.)

This lower regularity of minimizers undercuts many techniques used for
the Signorini problem, calling for caution even when dealing with the
first derivatives of the solution. Luckily, however, one of the most
important tools in our analysis, Almgren's frequency formula, still
works: one of the steps in the proof is based on a Rellich-type
identity, which in our case becomes an inequality in the correct
direction and allows the proof to go through.
\begin{figure}[t]
  \begin{picture}(320,140)(0,0)
    \put(0,20){\includegraphics[width=150pt]{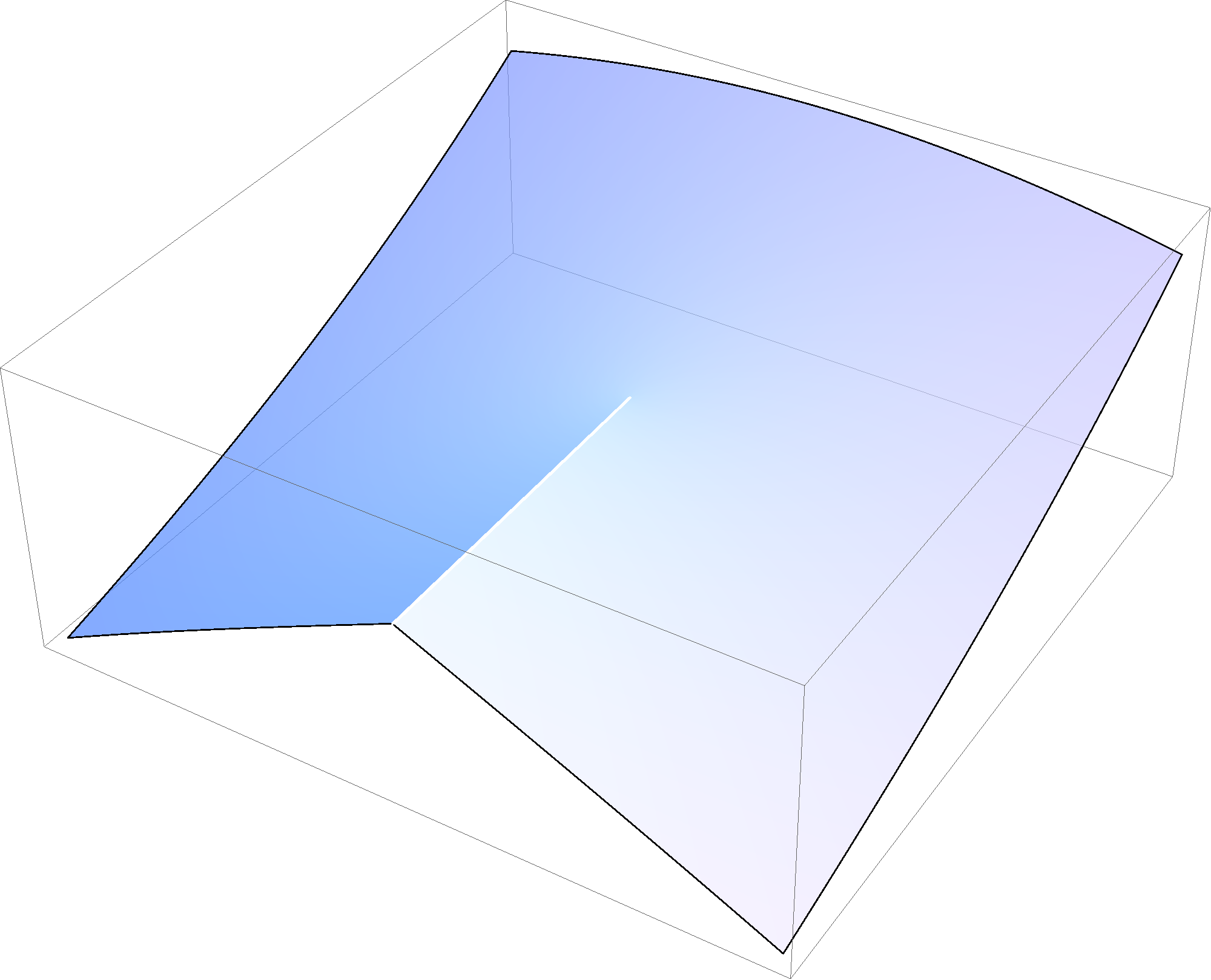}}
    \put(160,20){\includegraphics[width=150pt]{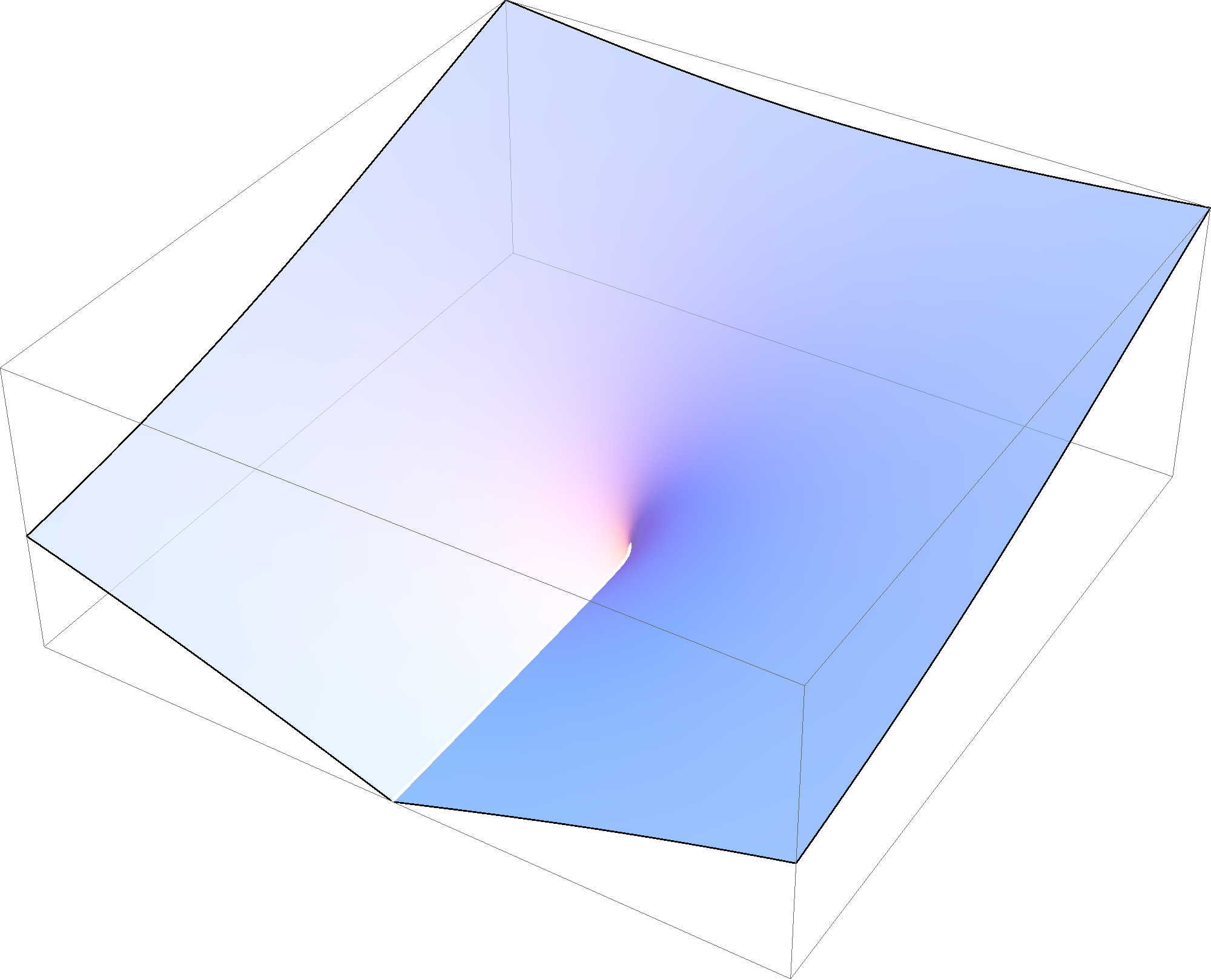}}

    \put(160,0){$\hat u_{1/2}(x)=\Re(x_1+i|x_n|)^{1/2}$}
    \put(0,0){$\hat u_{3/2}(x)=\Re(x_1+i|x_n|)^{3/2}$} 
  \end{picture}
  \caption{Examples of solutions limiting the optimal regularity:
    $\hat u_{3/2}(x)$ is an explicit solution of the Signorini problem
    and $\hat u_{1/2}(x) $ is a minimizer over $\K_0$ with worst
    possible regularity.}
  \label{fig:REZ}
\end{figure}

\subsection{Main results}
\label{sub:main_result}
The first main result in this paper establishes the optimal regularity
of the minimizers.
\begin{theorem}[Optimal regularity]\label{thm:C^1/2} If $u$ is a minimizer of the
  functional $J$ in \eqref{eq:energy} over $\K_0$ in \eqref{eq:K0},
  then $u\in C^{1/2}_{\loc}(B_1^+\cup B_1')$ with 
\[
\|u\|_{C^{1/2}(B_{1/2}^+\cup B_{1/2}')}\leq C_n\|u\|_{L^{2}(B_{1}^+)}.
\]
\end{theorem}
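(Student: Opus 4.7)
The plan is to adapt the Almgren frequency method to the presence of the fixed boundary $\Pi$. For a point $x_0\in\Pi\cap B_{1/2}'$ introduce the standard auxiliaries
\[
H(r):=\int_{(\partial B_r(x_0))^+} u^2,\qquad D(r):=\int_{B_r^+(x_0)}|\nabla u|^2,\qquad N(r):=\frac{rD(r)}{H(r)}.
\]
The first step is to show that $r\mapsto N(r)$ is nondecreasing on $(0,1/2]$. The proof will use the identities $H'(r)=\frac{n-1}{r}H(r)+2D(r)$ and a Rellich-type relation for $D'(r)$, both obtained by integration by parts; each produces a boundary contribution on $B_r'(x_0)$ of the form $\int u\,\partial_{x_n}u$ or $\int\bigl((x-x_0)'\cdot\nabla_{x'}u\bigr)\partial_{x_n}u$. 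The key observation is that both contributions vanish pointwise a.e.: on the clamped side $\{x_1\le 0\}$ the trace $u$ is identically zero, so every tangential derivative of $u$ vanishes; on the Signorini side $\{x_1>0\}$ the complementarity $u\,\partial_{x_n}u=0$ holds and $\nabla_{x'}u=0$ a.e.\ on $\{u=0\}$, so the products vanish almost everywhere. These two identities give $D'(r)=\frac{n-2}{r}D(r)+2\int_{(\partial B_r(x_0))^+}(\partial_\nu u)^2$, from which monotonicity of $N$ follows by the Cauchy--Schwarz inequality applied to $\int u\,\partial_\nu u$ and $\int(\partial_\nu u)^2$. Since $u$ is only $C^{1/2}$ at $x_0$, these manipulations have to be justified by an approximation (mollification or penalization), and this is the step where, as the excerpt notes, the Rellich identity becomes an inequality with the correct sign.

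The main obstacle is the lower bound $\kappa:=N(0+,u)\ge 1/2$. My plan is a blow-up argument. Setting $d(r):=(H(r)/r^{n-1})^{1/2}$, the rescalings $u_r(y):=u(x_0+ry)/d(r)$ satisfy $\int_{(\partial B_1)^+}u_r^2=1$ and $\int_{B_1^+}|\nabla u_r|^2=N(r)\le N(1/2)$, so a compactness argument produces a nontrivial limit $u_0$ along some sequence $r_j\downarrow 0$; monotonicity of $N$ (for both $u$ and $u_0$) then forces $u_0$ to be $\kappa$-homogeneous. The limit is a minimizer of the analogue of $J$ on the half-space: harmonic in $\R^n_+$, vanishing on $\{x_1\le 0,\,x_n=0\}$, and satisfying the Signorini conditions on $\{x_1>0,\,x_n=0\}$. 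Writing $u_0(y)=|y|^{\kappa}\phi(y/|y|)$ in spherical coordinates, $\phi$ is an eigenfunction on $S^{n-1}$ with eigenvalue $\kappa(\kappa+n-2)$ for a mixed Dirichlet/Signorini spherical problem. The task is then to classify such $\phi$: already in two dimensions, separation of variables in the half-disc with a slit shows the admissible homogeneities are exactly $\{1/2,1,3/2,2,\ldots\}$, with the smallest degree realized by the explicit solution $\hat u_{1/2}$ of \eqref{eq:reZ}. The higher-dimensional case reduces to the 2D one by the product structure of the spherical operator, yielding $\kappa\ge 1/2$ for every nontrivial blow-up and hence for every minimizer.

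Once $\kappa\ge 1/2$ is in hand, monotonicity gives $N(r)\ge 1/2$ throughout $(0,1/2]$, so $(\log H)'(r)=(n-1+2N(r))/r\ge n/r$; integrating from $r$ to $1/2$ and using $H(1/2)\le C_n\|u\|_{L^2(B_1^+)}^2$ yields $H(r)\le C_n r^{n}\|u\|_{L^2(B_1^+)}^2$ and hence $\int_{B_r^+(x_0)}u^2\le C_n r^{n+1}\|u\|_{L^2(B_1^+)}^2$. A standard $L^\infty$--$L^2$ estimate for subsolutions of the Signorini problem applied on $B_{r/2}^+(x_0)$ then extracts the pointwise decay $\sup_{B_{r/2}^+(x_0)}|u|\le C_n r^{1/2}\|u\|_{L^2(B_1^+)}$. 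Away from $\Pi$ the minimizer is either a classical Signorini solution (near $B_1'\cap\{x_1>0\}$) or a harmonic function with vanishing Dirichlet data (near $B_1'\cap\{x_1<0\}$, after odd reflection), so it enjoys interior Lipschitz bounds of the form $\|\nabla u\|_{L^\infty(B_{\rho/2}(z))}\le C_n\rho^{-1}\|u\|_{L^\infty(B_\rho(z))}$ on any ball $B_\rho(z)$ compactly contained in $B_1^+\cup(B_1'\setminus\Pi)$. A standard telescoping argument comparing these interior bounds with the boundary decay at $\Pi$ at scale $\rho\sim\dist(\cdot,\Pi)$ then converts everything into the claimed global $C^{1/2}$ estimate on $B_{1/2}^+\cup B_{1/2}'$.
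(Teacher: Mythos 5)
Your plan derives the $C^{1/2}$ estimate from Almgren's frequency monotonicity at points of $\Pi$ together with a classification of the admissible blow-up homogeneities $\kappa\ge 1/2$. Both steps have genuine gaps. The first is a circularity: to establish the differentiation identity $D(r)=\int_{\partial B_r}u\,u_\nu$ one must pass to a limit through cutoffs near the fixed edge $\Pi$, and the error term $\tfrac{1}{\epsilon}\int_{\{d\le 2\epsilon\}}u\,|\nabla u|$ is controlled only by using the bound $u^2\le C\epsilon$ on $\{d\le 2\epsilon\}$ --- i.e., precisely the $C^{1/2}$ decay you are trying to prove. A weaker a priori $C^\alpha$ bound with $\alpha<1/2$ yields a factor $\epsilon^{\alpha-1/2}$ that diverges. (You wrote ``since $u$ is only $C^{1/2}$ at $x_0$,'' which implicitly presupposes the conclusion.) For exactly this reason the paper proves $C^{1/2}$ \emph{before} establishing Almgren monotonicity and then uses the $C^{1/2}$ bound to justify the frequency identities.

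Second, the assertion that ``the higher-dimensional case reduces to the 2D one by the product structure of the spherical operator'' is unsubstantiated: the Laplace--Beltrami operator on $S^{n-1}$ with a Dirichlet condition on the codimension-one slit $\{x_1\le 0,\,x_n=0\}\cap S^{n-1}$ and a Signorini condition on the rest of the equator does not separate variables, and the analogous classification for the pure Signorini problem is stated in the paper's introduction to be open for $n\ge 3$. The paper sidesteps both difficulties by a different and more elementary tool: the Athanasopoulos--Caffarelli half-ball monotonicity formula $\phi(r)=r^{-1}\int_{B_r^+}|\nabla u|^2|x|^{2-n}\,dx$, whose monotonicity only needs that $u$ is continuous, harmonic off the thin set, and vanishes on a thin halfball, so it is available without a priori $C^{1/2}$. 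The uniform bound $\phi(r)\le\phi(3/4)\le C\|u\|_{L^2}^2$, combined with the Poincar\'e inequality on $B_r^+$ (valid since $u$ vanishes on $B_r'\cap\{x_1\le 0\}$) and a local $L^\infty$--$L^2$ estimate coming from subharmonicity of $u_\pm$, directly yields $\sup_{B_{r/2}(x_0)}|u|\le C_n r^{1/2}\|u\|_{L^2(B_1)}$ near $\Pi$, which is equivalent to $\kappa\ge 1/2$ without any blow-up classification. Your final telescoping step converting this decay into an interior $C^{1/2}$ estimate is fine, but the core of your argument needs to be rebuilt around this monotonicity formula rather than Almgren's frequency.
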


The regularity above implies that for any $x\in\Gamma(u)$ we have
$$
\kappa=N^x(0+,u)\geq 1/2.
$$
The knowledge of the possible values of $\kappa$ is important for the
classification of free boundary points (as we discussed at the end of
subsection \ref{sub:the_problem}).  Concerning these values we have
the following results.

\begin{theorem}[Minimal Almgren's frequency at contact points]\label{thm:contact points}
  If $u$ is a minimizer of $J$ over $\K_0$, then for a contact point
  $\bar x\in \Gamma'(u)$ we have
		$$\kappa=N^{\bar x}(0+,u) \geq 3/2.$$
\end{theorem}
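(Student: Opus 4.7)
The plan is to transfer the classical Signorini lower bound $\kappa\ge 3/2$ from nearby Signorini free boundary points up to the contact point $\bar x$, using monotonicity in the radius together with continuity in the base point of Almgren's frequency $N^x(r,u)$. Because $\bar x\in\Gamma'(u)=\overline{\Gamma(u)\cap\{x_1>0\}}\cap\Pi$, I can choose a sequence $x_k\in\Gamma(u)\cap\{x_1>0\}$ with $x_k\to\bar x$. Each such $x_k$ has $x_{k,1}>0$, so on every half-ball $B_r^+(x_k)$ with $r<x_{k,1}$ the constraint $u=0$ on $\{x_1\le 0\}$ is inactive and $u$ solves the standard Signorini problem there; accordingly $N^{x_k}(0+,u)\ge 3/2$ by the classical result of Athanasopoulos--Caffarelli--Salsa \cite{ACS}.

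Granted the Almgren monotonicity for minimizers of $J$ over $\K_0$ at every base point on the thin set $B_1'$ — the content of the Rellich-type inequality alluded to in the introduction, which I treat here as input from an earlier section of the paper — one obtains
\[
N^{x_k}(r,u)\,\ge\,N^{x_k}(0+,u)\,\ge\,\tfrac{3}{2}
\qquad\text{for all }r\in(0,r_0),
\]
with $r_0$ fixed independently of $k$. Fix such an $r$. The map $x\mapsto N^x(r,u)$ is continuous near $\bar x$: both integrals depend continuously on the base point, and the denominator $\int_{(\partial B_r(x))^+} u^2$ is bounded below in a neighborhood of $\bar x$, since otherwise unique continuation for the harmonic function $u$ in $B_1^+$ would force $u\equiv 0$ there, contradicting the existence of the sequence $x_k$ with $u>0$ nearby. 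Passing to the limit $k\to\infty$ yields $N^{\bar x}(r,u)\ge 3/2$ for every $r\in(0,r_0)$. Applying the same $\K_0$-monotonicity now at the base point $\bar x$, the limit $N^{\bar x}(0+,u)=\lim_{r\to 0+}N^{\bar x}(r,u)$ exists and inherits the bound $\ge 3/2$.

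The hard part is therefore the monotonicity input rather than the sandwich argument: one needs that for minimizers in $\K_0$ the frequency $r\mapsto N^x(r,u)$ is non-decreasing at every base point $x$ on the thin set, including both the mixed points $\bar x\in\Pi$ and the Signorini points $x_k$ whose larger half-balls eventually engulf the rigid region $\{x_1\le 0\}$. The Rellich identity goes through because $(y-x)\cdot e_n$ vanishes on $\{y_n=0\}$ whenever $x$ lies on the thin set, while the combined Signorini/Dirichlet conditions on $B_1'$ — $u\,u_{y_n}=0$ on the Signorini side and $u=0$ (hence vanishing tangential gradient) on the rigid side — force the remaining thin-boundary contributions to have the correct sign. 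This is precisely the ``inequality in the correct direction'' highlighted in the introduction; once it is secured, the continuity-and-limit argument above closes the proof.
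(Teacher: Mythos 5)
Your proposal is correct and follows essentially the same route as the paper: pick a sequence of genuine Signorini free boundary points $x_k\in\Gamma(u)\cap\{x_1>0\}$ converging to $\bar x$, invoke $N^{x_k}(0+,u)\ge 3/2$ from \cite{ACS}, combine the $\K_0$-monotonicity $N^{x_k}(r,u)\ge N^{x_k}(0+,u)$ with continuity of $x\mapsto N^x(r,u)$ for fixed $r$, and pass $k\to\infty$ then $r\to 0$. The paper packages the sandwich as upper semicontinuity of $x\mapsto N^x(0+,u)$ on $\Gamma(u)$, but that is an equivalent phrasing of your two-step limit.
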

At non-contact points we give a more complete picture.
\begin{theorem}[Almgren's frequency at non-contact
  points]\label{thm:noncontact}
  If $u$ is a minimizer of $J$ over $\K_0$, then for a non-contact
  point $\bar x\in \Gamma^*(u)$ we have that
		$$\kappa=N^{\bar x}(0+,u)$$
                can take only the following values:
$$ 
1/2,\ 3/2,\ 5/2,\ \ldots,\ m-1/2,\ \ldots.
$$
\end{theorem}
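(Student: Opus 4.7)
The plan is to reduce, near the non-contact point $\bar x$, to a linear mixed Dirichlet--Neumann problem; pass to an Almgren blow-up; reflect evenly across $\{x_n=0\}$; and then classify the $\kappa$-homogeneous harmonic functions on $\R^n\setminus\Sigma$ (where $\Sigma:=\{x_1\leq 0, x_n=0\}$) that vanish on $\Sigma$ and are even in $x_n$, via separation of variables in polar coordinates in the $(x_1,x_n)$-plane.

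First, since $\bar x\in\Gamma^*(u)$ is not in $\Gamma'(u)$, there exists $r_0>0$ with $\Gamma(u)\cap B_{r_0}'(\bar x)\cap\{x_1>0\}=\emptyset$; combined with $\bar x\in\Gamma(u)$, this forces $B_{r_0}'(\bar x)\cap\{x_1>0\}\subset\Omega(u)$, so $u>0$ there. The Signorini complementarity conditions then collapse to $\partial_{x_n}u=0$ on $\{x_1>0,x_n=0\}$, and in $B_{r_0}^+(\bar x)\cup B_{r_0}'(\bar x)$ the minimizer satisfies $\Delta u=0$ inside, $u=0$ on $\{x_1\leq 0,x_n=0\}$, and $\partial_{x_n}u=0$ on $\{x_1>0,x_n=0\}$. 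By Almgren's monotonicity (which, as the introduction asserts, still applies here via a Rellich inequality with the correct sign), $\kappa:=N^{\bar x}(0+,u)$ exists, and Theorem~\ref{thm:C^1/2} gives $\kappa\geq 1/2$. Normalized rescalings $u_r(x):=u(\bar x+rx)/\bigl(r^{1-n}\int_{(\partial B_r)^+(\bar x)}u^2\bigr)^{1/2}$ converge subsequentially to a nontrivial $\kappa$-homogeneous blow-up $u_0$ solving the same linear mixed problem on $\R^n_+$; the Neumann condition then permits an even reflection across $\{x_n=0\}$, producing a $\kappa$-homogeneous function (still denoted $u_0$) harmonic on $\R^n\setminus\Sigma$, vanishing on $\Sigma$, and even in $x_n$.

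For the classification, I introduce polar coordinates $(\rho,\theta)$ in the $(x_1,x_n)$-plane with $\theta\in(-\pi,\pi)$, so that $\Sigma=\{\theta=\pm\pi\}$ and evenness in $x_n$ translates to evenness in $\theta$. Together with the Dirichlet condition $u_0(\rho,\pm\pi,x'')=0$, this selects the complete angular basis $\{\cos((k+1/2)\theta)\}_{k\geq 0}$ with eigenvalues $(k+1/2)^2$. Expanding $u_0(\rho,\theta,x'')=\sum_{k\geq 0}a_k(\rho,x'')\cos((k+1/2)\theta)$ and using the cylindrical form of the Laplacian yields, for each $k$,
\[
\partial_{\rho\rho}a_k+\rho^{-1}\partial_\rho a_k-\rho^{-2}(k+1/2)^2 a_k+\Delta_{x''}a_k=0.
\]
A further spherical-harmonic decomposition of $a_k$ in the $x''$-variable on $S^{n-3}$, combined with the constraint that $u_0$ (and hence each mode) is $\kappa$-homogeneous in $(\rho,x'')$, reduces the $\rho$-part to an Euler equation whose regular solutions are $\rho^{k+1/2}$ multiplied by a harmonic polynomial of degree $d$ in $x''$. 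Thus every mode has homogeneity $\kappa=(k+1/2)+d$ with $k,d\in\{0,1,2,\ldots\}$, forcing $\kappa\in\{m-1/2:m\in\N\}$ as claimed.

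The main obstacle is the classification step: justifying termwise differentiation of the Fourier expansion, and in particular excluding the odd-in-$x_n$ angular modes $\sin(m\theta)$ ($m\in\N$), which would correspond to integer homogeneities $\kappa=1,2,3,\ldots$. These are eliminated precisely by the evenness of $u_0$ in $x_n$, which is a consequence of the Neumann condition on $\{x_1>0,x_n=0\}$ inherited from the non-contact hypothesis via the even reflection. A secondary subtlety is confirming that the blow-up $u_0$ satisfies the equality $\partial_{x_n}u_0=0$ on all of $\{x_1>0,x_n=0\}$ and not merely the one-sided Signorini inequalities; this follows from the strict positivity $u>0$ in a fixed neighborhood of $\bar x$ on that half-plane, established in the first step, which survives the passage to the rescaled limit.
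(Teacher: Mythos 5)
Your reduction to the linear mixed Dirichlet--Neumann problem near $\bar x$, the passage to the homogeneous blow-up $u_0$ (even in $x_n$, harmonic off the slit $\Sigma=\{x_1\le 0,\,x_n=0\}$, vanishing on $\Sigma$), and the angular Fourier expansion in $\{\cos((k+1/2)\theta)\}$ are all sound and match the setup in the paper. The conclusion is also correct. However, the classification step has a genuine gap: the claim that the $\kappa$-homogeneous, bounded-at-$\rho=0$ solutions of the mode equation
\[
\partial_{\rho\rho}a_k+\rho^{-1}\partial_\rho a_k-\rho^{-2}\bigl(k+\tfrac12\bigr)^2 a_k+\Delta_{x''}a_k=0
\]
are ``$\rho^{k+1/2}$ multiplied by a harmonic polynomial of degree $d$ in $x''$'' is false. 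This equation is a two-variable PDE in $(\rho,x'')$, not an ODE, and homogeneity reduces it to an eigenvalue ODE on a spherical arc, not to an Euler equation. For a concrete counterexample, take $n\ge 4$, $k=0$, $\kappa=5/2$: the function
\[
a_0(\rho,x'')=\rho^{1/2}x_2^2-\tfrac13\rho^{5/2}
\]
solves the mode equation (both groups of terms produce $\pm 2\rho^{1/2}$ which cancel) and corresponds to the genuine harmonic blow-up candidate $u_0=\hat u_{1/2}(x)\,\bigl(x_2^2-\tfrac13\rho^2\bigr)$, yet it is not of the form $\rho^{1/2}$ times a polynomial in $x''$ alone, let alone a harmonic one. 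The multiplier is a polynomial in the variables $(x',\rho)$ jointly, which is exactly the structure the paper extracts.

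This is precisely why the paper does not attempt a bare-hands separation of variables: the classification of homogeneous solutions of the slit-domain Laplacian is the nontrivial content of De Silva--Savin's expansion theorem \cite[Theorem~3.1]{DS0}, which asserts $u_0(x)=\hat u_{1/2}(x)\bigl(P_0(x',r)+o(|x|^{k+1})\bigr)$ with $P_0$ a polynomial in $(x',r)$, $r=\sqrt{x_1^2+x_n^2}$. Homogeneity of degree $\kappa$ then kills the error term and forces $P_0$ to be homogeneous of degree $\kappa-1/2$, which must therefore be a nonnegative integer. To repair your argument you would need to actually solve the resulting Jacobi-type ODE eigenvalue problem on an interval of the angular sphere and show its spectrum yields integer shifts of $k+1/2$; this is doable but is a real theorem, not a reduction to an Euler equation, and your write-up elides it.

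Two smaller remarks. First, your description of the Fourier coefficients as functions of $(\rho,x'')$ is fine, but when you then speak of a ``further spherical-harmonic decomposition on $S^{n-3}$'' the problem degenerates at $n=2,3$; the paper's route via $P_0(x',r)$ handles all $n\ge 2$ uniformly. Second, your preliminary observations (that $u>0$ on $B_{r_0}'(\bar x)\cap\{x_1>0\}$, forcing the Neumann condition there, and that this passes to the blow-up) are correct and do agree with the paper's implicit use of the non-contact hypothesis.
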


\section{Optimal Regularity}
\label{sec:opt-reg}

\subsection{Symmetrization}
\label{sub:symmetrization}
It will be convenient for our considerations to extend every function
$v\in\K_0$ by even symmetry in $x_n$-variable to the entire ball
$B_1$:
$$
v(x',-x_n):=v(x',x_n)\quad\text{for }(x',x_n)\in B_1^+.
$$
With such extension in mind, the energy $J$ in \eqref{eq:energy} can
be replaced with
\begin{equation}
  J(v):=\frac12\int_{B_1} |\nabla v|^2. 
  \label{eq:energy_symm}
\end{equation}

\subsection{H\"{o}lder continuity}
\label{sub:h"_o_lder_regularity}
As the first result towards the optimal regularity, we show that the
minimizers are $C^\alpha$ regular for some $\alpha>0$.

\begin{proposition}[H\"older continuity]\label{prop:Holder} If $u$ is a minimizer of $J$ over $\K_0$, then
  $u\in C^\alpha(B_{1/2})$, with a dimensional constant $\alpha>0$ and
  \[
  \|u\|_{C^\alpha(B_{1/2})}\leq C_n \|u\|_{L^2(B_{1})}.
  \]
\end{proposition}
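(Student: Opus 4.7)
The plan is to combine an $L^\infty$ boundedness estimate with a Morrey-type decay of the Dirichlet energy, the latter obtained from a Caccioppoli-plus-Poincar\'e (hole-filling) argument that exploits the vanishing of $u$ on the glued half-hyperplane $\Sigma:=B_1'\cap\{x_1\leq 0\}$.

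For the $L^\infty$ bound, I treat $u^-$ and $u^+$ separately. The negative part $u^-$ is subharmonic in $B_1\setminus B_1'$ (since after symmetrization $u$ is harmonic there) and vanishes identically on $B_1'$ (where $u\geq 0$); the sub-mean-value inequality then holds trivially at every point of $B_1'$, so $u^-$ is subharmonic in all of $B_1$, and the standard mean-value estimate yields $\|u^-\|_{L^\infty(B_{7/8})}\leq C_n\|u\|_{L^2(B_1)}$. For $u^+$, I run a De~Giorgi level-set iteration: for each $k>0$ and cutoff $\eta$, the competitor $v=u-\eta^2(u-k)^+$ lies in $\K_0$ because $(u-k)^+\equiv 0$ on $\Sigma$ (where $u=0<k$) and $v\geq\min(u,k)\geq 0$ on $B_1'\cap\{x_1>0\}$; substituting into the variational inequality produces the sub-solution Caccioppoli estimate on level sets that closes the standard iteration.

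For the H\"older step, fix $x_0\in B_{1/2}$ and small $r>0$, set $\phi(r):=\int_{B_r(x_0)}|\nabla u|^2$, and test the variational inequality with $v=(1-\eta^2)u\in\K_0$ (admissible since $v$ shares the sign of $u$ on $B_1'$ and vanishes on $\Sigma$) for a cutoff $\eta$ with $\eta\equiv 1$ on $B_r(x_0)$ and $\supp\eta\subset B_{2r}(x_0)$. This yields the annular Caccioppoli estimate
\[
\phi(r)\leq\frac{C_n}{r^2}\int_{B_{2r}(x_0)\setminus B_r(x_0)}u^2.
\]
I then split into cases in terms of $d:=\dist(x_0,\Pi)$. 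If $d\leq 2r$, the annulus meets $\Sigma$ in a set of capacity $\gtrsim r^{n-2}$; since $u\equiv 0$ there, the Poincar\'e inequality on the annulus gives $\int u^2\leq C_n r^2\int|\nabla u|^2$, and combining with Caccioppoli produces the hole-filling $\phi(r)\leq\theta\,\phi(2r)$ with dimensional $\theta\in(0,1)$. If $d>2r$, then in $B_{2r}(x_0)$ the problem reduces either to a purely harmonic one or to the classical Signorini problem, for which H\"older regularity is by now standard (see \cite{Ca1,U1}), giving $\phi(r)\leq C_n r^n$.

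The principal obstacle is quantitative: one must ensure the hole-filling factor $\theta$ is dimensionally small enough for the iteration to produce a genuine Morrey decay $\phi(r)\leq C_n r^{n-2+2\alpha}$ with $\alpha>0$. This rests on the half-hyperplane structure of $\Sigma$: inside the annulus it carries $(n-1)$-measure $\sim r^{n-1}$, hence capacity $\sim r^{n-2}$, producing the best-possible Poincar\'e constant that closes the hole-filling in every dimension. Morrey's embedding then yields $u\in C^\alpha(B_{1/2})$ with the asserted estimate.
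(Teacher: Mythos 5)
Your proposal takes a genuinely different route from the paper, and while the $L^\infty$ step is sound, the H\"older step has a gap that you yourself flag but do not close. Let me explain.

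The paper proceeds by (i) showing that \emph{both} $u_+$ and $u_-$ are subharmonic via an energy argument with a smoothed sign cutoff, which gives local boundedness for free; (ii) comparing $|u|$ with a barrier $v$ that solves the Dirichlet problem in the slit ball, vanishing on $B_1'\cap\{x_1\le0\}$; (iii) proving $v\in C^\alpha$ by a bi-Lipschitz flattening of the slit followed by the De Giorgi--Nash theorem, which yields the distance estimate $|u(x)|\le C\dist(x,\Sigma)^\alpha$; and (iv) upgrading this distance estimate to genuine H\"older continuity by a three-case analysis (far from the slit; near the slit but off $B_1'$; near $B_1'$, where interior Lipschitz regularity for the Signorini problem applies). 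Your $L^\infty$ step is correct, though it is heavier than necessary: $u_+$ is also subharmonic by exactly the argument you use for $u_-$ combined with the Signorini condition $u\,\partial_{x_n}u=0$ on $B_1'$ (this is the paper's Lemma~2.2), so the De Giorgi level-set iteration for $u^+$ duplicates work.

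The real difficulty is in your second step. The hole-filling iteration gives $\phi(r)\le\theta\,\phi(2r)$ with $\theta=\frac{C}{1+C}$, where $C$ is the product of the Caccioppoli constant and the annular Poincar\'e constant. This yields $\phi(r)\le C\,r^{2\beta}$ with $2\beta=\log_2(1/\theta)$, and for Morrey's embedding one would need $2\beta>n-2$, i.e.\ $\theta<2^{2-n}$. You explicitly identify this as ``the principal obstacle'' and assert that the half-hyperplane structure of $\Sigma$ forces the ``best-possible'' Poincar\'e constant, ``closing the hole-filling in every dimension.'' This assertion is not proved and is almost certainly false for large $n$: the Caccioppoli constant is a fixed absolute constant (it cannot be pushed below the Young-inequality floor), and the mixed Dirichlet--Neumann Poincar\'e constant on the annulus, while dimensional, decays at best polynomially in $n$, whereas $2^{2-n}$ decays exponentially. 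So $\theta<2^{2-n}$ fails for $n$ large, and the Morrey decay $\phi(r)\le C r^{n-2+2\alpha}$ with $\alpha>0$ does not follow from the hole-filling alone. (This is precisely why De Giorgi's theorem for divergence-form equations cannot be reduced to a simple hole-filling + Morrey argument in general dimension; one needs the oscillation-decay machinery on level sets.) The paper avoids this entirely by handing the problem to De Giorgi--Nash for the \emph{barrier} $v$, extracting only the comparison estimate $|u|\le Mv$ from the minimality of $u$. There is also a secondary imprecision: the dichotomy should be on $\dist(x_0,\Sigma)$ rather than $\dist(x_0,\Pi)$, since a point $x_0$ deep inside $B_1'\cap\{x_1<0\}$ has $\dist(x_0,\Pi)$ bounded below yet the slit still runs through every ball around $x_0$, so ``purely harmonic or classical Signorini'' does not describe that regime. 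That is a minor fix, but the quantitative hole-filling issue is a genuine gap.
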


We start by showing that the positive and negative parts of the
minimizer $u$ are subharmonic. Note that at this stage we have not yet
established the continuity of $u$, so we will resort to the energy
methods.

\begin{lemma}\label{lem:subharmonicity} $u_{\pm}=\max\{\pm u , 0\}$
  are subharmonic functions in $B_1$.
\end{lemma}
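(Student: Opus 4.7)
The plan is to establish subharmonicity of $u_\pm$ in the distributional sense by testing the minimality of $u$ against carefully truncated one-sided perturbations. The naive competitor $v = u - \epsilon\phi$ with $\phi\geq 0$ does not belong to $\K_0$: it typically violates either the clamping condition $v=0$ on $B_1'\cap\{x_1\leq 0\}$ or the Signorini sign condition $v\geq 0$ on $B_1'\cap\{x_1>0\}$. Truncation by $u_+$ repairs both defects simultaneously.

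Concretely, for $u_+$ I would fix $\phi\in C_c^\infty(B_1)$ with $\phi\geq 0$ and consider
\[
v:=u-\min(\epsilon\phi,u_+)\;=\;-u_-+(u-\epsilon\phi)_+.
\]
Splitting according to the three regions $\{u\leq 0\}$, $\{0<u\leq\epsilon\phi\}$ and $\{u>\epsilon\phi\}$, one checks $v\in\K_0$: on $B_1'$, where $u\geq 0$, we have $v=(u-\epsilon\phi)_+\geq 0$, and $v=0$ on $B_1'\cap\{x_1\leq 0\}$ because $u$ does there. The same decomposition gives
\[
J(v)-J(u)=-\epsilon\int_{\{u>\epsilon\phi\}}\nabla u\cdot\nabla\phi+\frac{\epsilon^2}{2}\int_{\{u>\epsilon\phi\}}|\nabla\phi|^2-\frac12\int_{\{0<u\leq\epsilon\phi\}}|\nabla u|^2\geq 0.
\]
The last term on the right is nonpositive, so discarding it only strengthens the inequality; dividing by $\epsilon$ and passing to the limit $\epsilon\to 0^+$ (dominated convergence controls the first integral) yields $\int_{\{u>0\}}\nabla u\cdot\nabla\phi\leq 0$. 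Since $\nabla u_+=\chi_{\{u>0\}}\nabla u$ a.e., this is precisely $\Delta u_+\geq 0$ distributionally on $B_1$. The argument for $u_-$ is symmetric, with competitor $v=u+\min(\epsilon\phi,u_-)$; here membership $v\in\K_0$ is automatic since $u_-\equiv 0$ on $B_1'$, and the analogous energy expansion produces $\Delta u_-\geq 0$.

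The only genuine difficulty is building an admissible competitor that simultaneously honors the two-sided clamping constraint and the one-sided Signorini constraint; the truncation $\min(\epsilon\phi,u_\pm)$ is tailored precisely for this purpose. Once the competitor is in hand, the remainder is a routine energy expansion and a limiting argument.
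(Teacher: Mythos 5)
Your proof is correct. The verification that $v=u-\min(\epsilon\phi,u_+)\in\K_0$ (the perturbation vanishes wherever $u\leq 0$, so the clamping on $B_1'\cap\{x_1\leq0\}$ and the Signorini sign condition on $B_1'$ are preserved, and since $\phi$ is compactly supported the boundary trace is unchanged), the three-way decomposition of the energy difference, the discarding of the nonpositive $-\tfrac12\int_{\{0<u\leq\epsilon\phi\}}|\nabla u|^2$ term, and the dominated-convergence passage $\epsilon\to0^+$ are all sound, as is the symmetric argument for $u_-$. One small point you should make explicit: to conclude distributional subharmonicity on all of $B_1$ from variations in $\K_0\subset W^{1,2}(B_1^+)$, reduce to test functions $\phi$ that are even in $x_n$ (replace $\phi$ by its even part, which is still nonnegative); this is harmless because $u_\pm$ are even, so the odd part of $\phi$ contributes nothing to $\int_{B_1}\nabla u_\pm\cdot\nabla\phi$.

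Your route differs genuinely from the paper's. The paper uses a \emph{bilateral} variation $u+t\eta\,\psi_\epsilon(u_\pm)$, where $\psi_\epsilon$ is a smooth cutoff vanishing on $(-\infty,\epsilon)$; since the perturbation vanishes identically near $\{u=0\}$, the signs of $u$ are preserved for small $|t|$, so the competitor lies in $\K_0$ for both signs of $t$ and one obtains an exact Euler--Lagrange \emph{equality}
\[
0=\int_{B_1}\nabla u\cdot\nabla\eta\;\psi_\epsilon(u_\pm)\pm\int_{B_1}|\nabla u|^2\,\psi_\epsilon'(u_\pm)\,\eta,
\]
after which the second (sign-definite) term is dropped and $\epsilon\to 0$. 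Your argument instead builds a \emph{unilateral} Lipschitz competitor $u-\min(\epsilon\phi,u_+)$ and uses only the one-sided inequality $J(v)\geq J(u)$, with the sign-definite error appearing as the energy over the thin slab $\{0<u\leq\epsilon\phi\}$. The common mechanism in both is that the perturbation is truncated to vanish across $\{u=0\}$ so that admissibility is automatic, and a sign-definite remainder is discarded; the trade-off is a smooth chain-rule computation and a two-sided variation in the paper versus a Lipschitz truncation and a one-sided energy comparison in yours. Your version is arguably more elementary (no smooth approximation of the indicator is needed) and closer to the classical Hartman--Stampacchia style of competitors; the paper's version sets up machinery it reuses in later lemmas (e.g.\ the same $\psi(d/\epsilon)$-type cutoff reappears in the proof of the first Almgren identity).
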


\begin{proof}\label{pf:subharmonicity}
  Proving the lemma is equivalent to showing that for any nonnegative
  test function $\eta \in C_0^\infty({B_1})$ we have
  \begin{equation} \label{eq:subharm} \int_{B_1} \nabla u_{\pm} \nabla
    \eta \leq 0.
  \end{equation}
  Let $\psi_\epsilon\in C^\infty(\R)$ be a nondecreasing function such
  that
$$
\psi_\epsilon =0\quad\text{in }(-\infty,\epsilon),\quad
0\leq\psi_\epsilon\leq 1 \quad\text{in }(\epsilon,2\epsilon),\quad
\psi_\epsilon=1\quad\text{in }(2\epsilon,\infty).
$$
Then for a fixed $\epsilon >0$ and sufficiently small $|t|$ we have
\begin{align*}
  \{u>0\}= &\{ u +t \eta \psi_\epsilon(u_{\pm}) >0\} \\
  \{u<0\}= &\{ u+t \eta \psi_\epsilon(u_{\pm}) <0\}
\end{align*}
and thus $u+t\eta \psi_\epsilon(u_{\pm}) $ are admissible functions
from $\K_0$.  Since $u$ is a minimizer, we have $J(u+t\eta
\psi_\epsilon(u_{\pm}) )\geq J(u)$, yielding
\begin{align*}
  0=\int_{B_1} \nabla u \nabla \left(\eta \psi_\epsilon(u_{\pm})
  \right ) = \int_{B_1} \nabla u \nabla \eta \psi_\epsilon(u_{\pm})
  \pm \int_{B_1}|\nabla u|^2 \psi'_\epsilon(u_{\pm})\eta.
\end{align*}
Since the second integral is nonnegative, sending $\epsilon $ to $0$
we obtain \eqref{eq:subharm}.
\end{proof}

Once we know that $u_\pm$ are subharmonic in $B_1$, we immediately
obtain that $u$ is locally bounded.

\begin{lemma}[Local boundedness]\label{lem:loc-bound}
  If $u$ is a minimizer of $J$ over $\K_0$, then $u\in
  L^\infty(B_{3/4})$ and more precisely
  \[
  \pushQED{\qed} \sup_{B_{3/4}}{|u|} \leq C_n \| u \|_{L^2(B_1)}.
  \popQED
  \]
\end{lemma}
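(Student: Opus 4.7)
The plan is to deduce this immediately from Lemma~\ref{lem:subharmonicity} via the sub-mean value property for subharmonic functions. Since $u\in W^{1,2}(B_1)$ and $u_\pm$ are weakly subharmonic in $B_1$, each $u_\pm$ admits an upper semicontinuous representative (which we identify with $u_\pm$) satisfying the pointwise sub-mean value inequality
\[
u_\pm(x)\leq \frac{1}{|B_r(x)|}\int_{B_r(x)} u_\pm\d y
\]
for every $x\in B_1$ and every $r<\dist(x,\partial B_1)$.

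I would apply this at $r=1/4$ for any $x\in B_{3/4}$, so that $B_{1/4}(x)\subset B_1$, and estimate the average by Cauchy--Schwarz:
\[
u_\pm(x)\leq \frac{1}{|B_{1/4}|}\int_{B_{1/4}(x)} u_\pm\d y \leq \frac{1}{|B_{1/4}|^{1/2}}\|u_\pm\|_{L^2(B_{1/4}(x))} \leq C_n\|u\|_{L^2(B_1)}.
\]
Taking the supremum over $x\in B_{3/4}$ and combining the bounds for $u_+$ and $u_-$ yields the claim.

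There is no real obstacle: the only mildly delicate point is choosing the right representative of the weakly subharmonic $W^{1,2}$ function $u_\pm$ so that the pointwise mean value inequality is available. This is standard (one may, for instance, mollify $u_\pm$ to obtain a smooth subharmonic approximation, apply the mean value inequality to the mollification, and pass to the limit), and the resulting pointwise representative of $u$ coincides almost everywhere with the weak solution.
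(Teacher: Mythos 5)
Your proof is correct and is exactly the standard argument the paper has in mind (the paper gives no proof, regarding the lemma as an immediate consequence of the subharmonicity of $u_\pm$). Invoking the sub-mean value inequality for the upper semicontinuous representative of a weakly subharmonic $W^{1,2}$ function, then applying Cauchy--Schwarz, is precisely what ``immediately obtain'' refers to, and your remark about choosing the right representative via mollification is the right way to tie up the one delicate point.
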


We can now proceed to the proof of H\"older continuity.

\begin{proof}[Proof of Proposition~\ref{prop:Holder}.]
  Using the local boundedness and the fact that $u_\pm$ vanish on
  $B_1'\cap\{x_1\leq 0\}$, by the comparison principle we can write
  that
  \begin{equation}\label{eq:uMv}
    |u|\leq Mv\quad\text{in }B_{3/4},
  \end{equation}
  where $M=C_n\|u\|_{L^2(B_1)}$ and $v$ solves
  \begin{equation}\label{eq:v}
    \begin{aligned}
      \Delta v=0&\quad\text{in } B_{3/4}\setminus
      (B'_{3/4}\cap\{x_1\leq
      0\})\\
      v=0&\quad\text{on }B'_{5/8}\cap\{x_1\leq
      0\}\\
      v=1&\quad\text{on }\partial B_{3/4}
    \end{aligned}
  \end{equation}
  with boundary values changing continuously from $0$ to $1$ in
  $(B'_{3/4}\setminus B'_{5/8})\cap \left \{x_1 \leq 0\right \}$.  We
  next claim that the barrier function $v$ above is in
  $C^\alpha(B_{1/2})$.  Indeed, we can use a bi-Lipschitz
  transformation to map $B_{3/4}\setminus (B'_{3/4}\cap\{x_1\leq 0\})$
  to $B_{3/4}^+$ preserving the distance from the origin. Then $v$
  will transform into $w$, which would be a solution of a uniformly
  elliptic equation in divergence form with measurable coefficients:
  \begin{equation} \label{eq:div-form}
    \begin{aligned}
      \div (a_{ij}w_j)=0  &\quad\text{in } B_{3/4}^+\\
      w=0 &\quad \text{on } B'_{5/8}.
    \end{aligned}
  \end{equation}
  By the De Giorgi-Nash theorem, we know $w \in C^\alpha(B^+_{1/2})$,
  and since the transformation is bi-Lipschitz we also get $v\in
  C^\alpha(B_{1/2})$, which provides
  \begin{equation} \label{eq:mapped_u} |v(x)| \leq C \dist (x,
    B'_1\cap \left \{x_1 \leq 0\right \})^\alpha.
  \end{equation}
  The latter, together with \eqref{eq:uMv} gives
  \begin{equation} \label{eq:mapped_u1} |u(x)| \leq C M \dist (x,
    B'_1\cap \left \{x_1 \leq 0\right \})^\alpha.
  \end{equation}
  Combined with the next lemma, this implies $u\in C^\alpha(B_{1/2})$.
\end{proof}
\begin{lemma}\label{lem:holder-reg} Let $u$ be a minimizer of $J$ over
  $\K_0$. If for a $0< \beta \leq 1$ and all $x,y\in B_{1/2}$ the
  following property holds:
  \begin{equation} \label{eq:dist} |u(x)| \leq C_0 \dist (x, B'_1\cap
    \left \{x_1 \leq 0\right \})^\beta
  \end{equation}
  then $u\in C^\beta (B_{1/2})$ with $\|u\|_{C^\beta (B_{1/2})}$
  depending only on $C_0$, $n$, $\beta$.
\end{lemma}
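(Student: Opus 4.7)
My plan would combine the hypothesis \eqref{eq:dist}, which controls $u$ near the fixed set $\Pi_0 := B_1' \cap \{x_1 \leq 0\}$, with interior regularity for the Signorini problem valid on any ball that avoids $\Pi_0$. After the even reflection in $x_n$ from §\ref{sub:symmetrization}, $u$ is a solution of the Signorini problem (or simply harmonic) on every ball $B_\rho(x_0) \subset B_1$ with $B_\rho(x_0) \cap \Pi_0 = \emptyset$, so the interior $C^{1,1/2}$ theory of \cite{AC} together with scaling yields
$$
[u]_{C^\beta(B_{\rho/2}(x_0))} \leq C \rho^{-\beta}\, \|u\|_{L^\infty(B_\rho(x_0))}.
$$

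The first step is to apply this at the natural scale $\rho = d(x_0)/2$, where $d(x_0) := \dist(x_0,\Pi_0)$, for $x_0 \in B_{1/4}$ (chosen so that $B_\rho(x_0)\subset B_{1/2}$). Every $z \in B_\rho(x_0)$ satisfies $d(z) \leq d(x_0)+\rho \leq 3d(x_0)/2$, so \eqref{eq:dist} gives $\sup_{B_\rho(x_0)}|u| \leq C_0 (3d(x_0)/2)^\beta$. Plugging this into the interior bound produces the scale-invariant estimate
$$
[u]_{C^\beta(B_{\rho/2}(x_0))} \leq C(C_0,n,\beta).
$$

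The second step is a standard Campanato-type chaining. Given $x,y \in B_{1/4}$, I would split on whether $|x-y| < d(x)/4$ or not. In the first case, $y \in B_{\rho/2}(x)$ and the seminorm bound from the first step immediately gives $|u(x)-u(y)| \leq C|x-y|^\beta$. In the second case, $d(x) \leq 4|x-y|$, and since $d(y) \leq d(x)+|x-y| \leq 5|x-y|$, applying \eqref{eq:dist} at both points yields $|u(x)|,|u(y)| \leq C|x-y|^\beta$ and hence the same Hölder bound on the difference. Together with the pointwise $L^\infty$ control $|u(x)| \leq C_0 d(x)^\beta \leq C_0$ coming from \eqref{eq:dist}, this establishes $u \in C^\beta(B_{1/4})$ with norm depending only on $C_0$, $n$, $\beta$; a standard covering/rescaling argument then upgrades the estimate to $B_{1/2}$.

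The only substantial ingredient is the interior $C^{1,1/2}$ regularity of the Signorini problem from \cite{AC}, which is already part of the paper's toolkit. Once that is invoked on balls disjoint from $\Pi_0$, the rest is essentially bookkeeping. The one subtlety to watch is the domain of validity: keeping the auxiliary balls $B_\rho(x_0)$ inside $B_{1/2}$ forces working first in a slightly smaller ball and using rescaling; this is the only place where any care is required.
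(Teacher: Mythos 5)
Your proposal takes essentially the same route as the paper: a chaining argument that compares $|x-y|$ with the distance $d(x)$ to the fixed set, using the hypothesis \eqref{eq:dist} directly when $|x-y|\gtrsim d(x)$ and an interior regularity estimate at scale $\sim d(x)$ otherwise. The only place where you compress the argument is the claim that the interior $C^{1,1/2}$ theory of \cite{AC} ``together with scaling'' yields the scaled seminorm bound on an arbitrary ball $B_\rho(x_0)$ avoiding $\Pi_0$; the estimate in \cite{AC} is stated for balls centered on the thin set, so if $B_\rho(x_0)$ meets $B_1'$ without being centered there you need a short re-centering step. The paper makes this explicit by splitting the near case into two sub-cases: when $x_n>d_x/4$ the ball $B_{d_x/4}(x)$ lies in $B_1^+$ and one uses interior gradient estimates for harmonic functions, and when $x_n\le d_x/4$ one re-centers to $B_{(3/4)d_x}(x',0)\subset B_{d_x}(x)$ and applies the Signorini Lipschitz estimate. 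Filling in that one detail, your argument and the paper's coincide.
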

\begin{proof}\label{pf:holder-reg}
  Denote $d_x:=\dist (x, B'_1\cap \left \{x_1 \leq 0\right \})$. Take
  any $x,y \in B_{1/2}$. Without loss of generality we can assume
  $x\in B^+_1$ and $d_y\leq d_x$. We will consider three cases:
  \begin{enumerate}[1)]
  \item $|x-y|>d_x/8$. Using \eqref{eq:dist} we get
  $$
  |u(x)-u(y) | \leq C_0 (d_x^\beta + d_y^\beta)\leq 2 C_0 8^\beta
  |x-y|^\beta.
  $$
\item $|x-y|\leq d_x/8$ and the $n$-th coordinate of $x$, $x_n>
  d_x/4$.  In this case we observe that $B_{d_x/4}(x)\subset
  B^+_1$ and thus $u$ is harmonic there, $x,y\in B_{d_x/8}(x)$ and the
  interior gradient estimates for harmonic functions imply
  \begin{align*}
    |u(x) - u(y)| &\leq C_n \|u\|_{L^{\infty}(B_{d_x/4}(x))} \frac{
      |x-y| }{ d_x } \\ &\leq C_n C_0 (5/4)^\beta d_x^\beta \frac{
      |x-y|^\beta (d_x/8)^{1-\beta } }{ d_x }= C|x-y|^\beta.
  \end{align*}
  
\item $|x-y|\leq d_x/8$ and $x_n \leq d_x/4$.  In this case
  $B_{(3/4)d_x}(x',0)\subset B_{d_x}(x)$. Thus $u$ solves the Signorini
  problem in $B_{(3/4)d_x}(x',0)$ and $x,y\in B_{(3/8)d_x}(x',0)$. Using
  the interior Lipschitz regularity for the solutions of the Signorini
  problem, see \cite[Theorem 1]{AC}, we have
  $$
  |u(x) - u(y)| \leq C_n \|u\|_{L^{\infty}(B_{(3/4)d_x}(x))} \frac{
    |x-y| }{ d_x }
  $$
  and we complete the proof as in the previous case.\qedhere
\end{enumerate}
\end{proof}

\subsection{Monotonicity formula in the halfball}
\label{sub:half_monotonicity_formula}
As we observed in the introduction, we know that the function $\hat
u_{1/2}$ restricts the regularity of our solutions to $C^{1/2}$. In
order to rigorously obtain that $C^{1/2}$ is also the minimum expected
(and thus optimal) regularity, we need the following monotonicity
formula for the halfball, first introduced in \cite{AC}.
\begin{lemma}[{Monotonicity formula, \cite[Lemma
    4]{AC}}]\label{lem:half-monotonicity}
  For any $w\in C(\overline{B_1^+})$ satisfying
  \begin{alignat*}{2}
    \Delta w  &=0     \quad \text{ in } B_1^+,\\
    w &= 0 \quad \text{ on } B_1' \cap  \left \{x_1 \leq 0\ \right \},\\
    w\geq 0,\quad w\partial_{x_n} w &= 0 \quad \text{ on } B_1'.
  \end{alignat*}
  Then the function
  $$
  \phi(r):=\frac{1}{r} \int_{B_r^+} \frac{ |\nabla w|^2 }{ |x|^{n-2} }
  \d x
  $$
  is nondecreasing for $r \in (0, 1)$.
\end{lemma}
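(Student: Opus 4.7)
The plan is to compute $\phi'(r)$ directly and reduce its sign to a Rellich--Pohozaev identity whose crucial boundary term on $B_r'$ vanishes as a consequence of the Signorini complementarity conditions. Differentiating $\phi(r) = r^{-1}D(r)$ with $D(r) := \int_{B_r^+} |\nabla w|^2/|x|^{n-2}\,dx$ gives $r^2 \phi'(r) = rD'(r) - D(r)$; by the coarea formula, $D'(r) = r^{-(n-2)}I(r)$ with $I(r) := \int_{(\partial B_r)^+}|\nabla w|^2\,dS$. Thus $\phi'(r) \geq 0$ is equivalent to the averaging inequality $g(r) \geq \frac{1}{r}\int_0^r g$ for $g(\rho) := \rho^{-(n-2)}I(\rho)$, which under a scaling is identified with the unit hemisphere Dirichlet energy of the $1/2$-homogeneous rescaling $\bar w_\rho(y) := \rho^{-1/2}w(\rho y)$.

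The key step is the Rellich identity obtained from $\int_{B_r^+} \Delta w \,(x \cdot \nabla w)\,dx = 0$. Standard integration by parts gives
\[
(n-2) E(r) = \int_{\partial B_r^+}(x \cdot \nu)|\nabla w|^2\,dS - 2\int_{\partial B_r^+}(\partial_\nu w)(x \cdot \nabla w)\,dS
\]
with $E(r) := \int_{B_r^+}|\nabla w|^2$. The contribution on $B_r'$ vanishes: the factor $x \cdot \nu = -x_n = 0$ kills the first term, while the second boundary term reduces to $\int_{B_r'} \partial_{x_n} w \,(x' \cdot \nabla_{x'} w)\,dS$, which also vanishes because on $B_r' \cap \{x_1 \leq 0\}$ one has $w \equiv 0$ (hence $\nabla_{x'}w = 0$), and on $B_r' \cap \{x_1 > 0\}$ the complementarity $w\,\partial_{x_n}w = 0$ together with $w \geq 0$, $\partial_{x_n}w \leq 0$ forces the product $\partial_{x_n}w \cdot \nabla_{x'}w$ to vanish a.e.\ (either $\partial_{x_n}w = 0$ on $\{w > 0\}$ or $\nabla_{x'}w = 0$ on $\{w = 0\}$). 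What remains is the clean identity $r[T(r) - A(r)] = (n-2)E(r)$, with $A(r)$ and $T(r)$ the radial and tangential pieces of $I(r)$ on $(\partial B_r)^+$.

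Combining with the coarea splitting $D(r) = \int_0^r \rho^{-(n-2)}[A(\rho) + T(\rho)]\,d\rho$ and integrating by parts in $\rho$ reduces the claim $\phi'(r) \geq 0$ to the monotonicity of $f(\rho) := \rho^{-(n-1)}E(\rho)$, which in turn is the inequality $2\rho A(\rho) \geq E(\rho)$. The latter encodes that every ``frequency'' of $w$ at the origin is at least $1/2$---the minimal eigenvalue of the spherical problem on $(S^{n-1})^+$ with the induced mixed Dirichlet--Signorini conditions, corresponding to the explicit solution $\hat u_{1/2}$. It can be proved by expanding $w$ in the eigenfunctions of this spherical problem (all of whose exponents $\kappa_k$ satisfy $\kappa_k \geq 1/2$) and verifying the inequality termwise: for $w = \sum c_k \rho^{\kappa_k} \phi_k(\theta)$, orthogonality yields $2\rho A - E = \sum_k \kappa_k(2\kappa_k - 1) c_k^2 \rho^{2\kappa_k + n - 2} \|\phi_k\|^2 \geq 0$.

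The main obstacle is the rigorous handling of the singular weight $|x|^{-(n-2)}$: every integration by parts in the Rellich step should first be performed on $B_r^+ \setminus B_\epsilon^+$ and then passed to the limit $\epsilon \to 0$, with the auxiliary contributions on $(\partial B_\epsilon)^+$ controlled via the H\"older continuity of Proposition~\ref{prop:Holder} together with the interior $C^{1,1/2}$ regularity on $\{x_1 > 0\}$ inherited from the classical Signorini theory. A secondary delicate point is to ensure the final reduction avoids circularity with Almgren's monotonicity by appealing directly to the first eigenvalue of the mixed boundary-value problem on $(S^{n-1})^+$, which can be computed independently of the lemma itself.
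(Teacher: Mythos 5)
Your Rellich identity on $B_r^+$ is correct, and the argument that the $B_r'$ boundary contribution vanishes (Dirichlet on $\{x_1\leq 0\}$, complementarity plus $\nabla_{x'}w=0$ a.e.\ on $\{w=0\}$ on the rest) is sound and does match what is needed.  The problem is in the final step.

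Your reduction says $\phi'\geq 0$ would follow from $f(\rho):=\rho^{-(n-1)}E(\rho)$ being nondecreasing, i.e.\ from $2\rho A(\rho)\geq E(\rho)$.  That implication is correct ($D(r)=rf(r)+(n-2)\int_0^r f$, so $\phi=f+(n-2)\cdot(\text{average of }f)$), but it is an over\-reduction, and your proposed proof of $2\rho A\geq E$ does not go through.  First, the ``expansion $w=\sum c_k\rho^{\kappa_k}\phi_k(\theta)$'' is not available: $w$ solves a variational inequality, not a linear boundary-value problem, so it is not a superposition of homogeneous eigen-solutions, and verifying the inequality ``termwise'' has no meaning.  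Second, if you substitute the Rellich identity $T-A=(n-2)E/\rho$ into $2\rho A\geq E$ you find (for $n\geq 3$) the equivalent form $(2n-3)A(\rho)\geq T(\rho)$, i.e.\ an \emph{upper} bound on the tangential spherical energy by the radial one.  But the Poincar\'e inequality on the slit hemisphere goes the other way --- it gives $\rho^2 T\geq\tfrac{2n-3}{4}H$, a \emph{lower} bound on $T$ --- so it cannot yield what you need, and combining it with Cauchy--Schwarz leads back to $(2n-3)A\geq T$, i.e.\ a circle.

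The argument that actually works (this is the content of \cite[Lemma 4]{AC}, which the paper simply cites, noting only that the convexity hypothesis there can be replaced by the Dirichlet condition on $B_1'\cap\{x_1\leq 0\}$) does not pass through the monotonicity of $f$.  Instead one uses that $|x|^{-(n-2)}$ is harmonic away from $0$ and $|\nabla w|^2=\tfrac12\Delta(w^2)$ to write, after the $B_r'$ terms drop out by complementarity (and the delta term vanishes since $w(0)=0$),
\[
\phi(r)=\frac{1}{r^{n-1}}\int_{(\partial B_r)^+}w\,w_\rho
      +\frac{n-2}{2\,r^{n}}\int_{(\partial B_r)^+}w^2 .
\]
Then $\phi'(r)\geq 0$ becomes the boundary inequality
\[
r^2\int_{(\partial B_r)^+}|\nabla w|^2\;\geq\;
r\int_{(\partial B_r)^+}w\,w_\rho+\frac{n-2}{2}\int_{(\partial B_r)^+}w^2 .
\]
Splitting $|\nabla w|^2$ into radial and tangential parts, completing the square $r^2 w_\rho^2-r\,w\,w_\rho=\bigl(r w_\rho-\tfrac{w}{2}\bigr)^2-\tfrac{w^2}{4}$, one is left to check exactly the slit-hemisphere Poincar\'e inequality $r^2 T\geq\tfrac{2n-3}{4}H$.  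That inequality is legitimate: it is a fixed linear eigenvalue estimate for test functions vanishing on the lower-dimensional half-equator, with first eigenvalue $\tfrac12(\tfrac12+n-2)=\tfrac{2n-3}{4}$, realized by the trace of $\hat u_{1/2}$; this is also where the hypothesis $w=0$ on $B_1'\cap\{x_1\leq 0\}$ (and, in AC, the convexity of the complement of the support) is actually used.  So your ``main ingredients'' are the right ones, but they must be deployed on $\phi'$ directly, not on $f'$.
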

\begin{proof}\label{pf:half-monotonicity}
  The proof is a verbatim repetition of that of \cite[Lemma 4]{AC},
  despite of the slight difference in the assumptions. Namely, instead
  of asking the convexity of the set $\{x'\in B_1' :w(x',0) > 0\}$, we
  note that it is only used to show that the complement set of the
  support of $w$ contains the lower dimensional halfball $B_1'\cap \{
  x_1 \leq 0\}$, which is automatically satisfied in the setting of
  our problem.
\end{proof}

\subsection{Optimal $C^{1/2}$ regularity of minimizers}
\label{sub:optimal_c_alpha_b__1_2_}
We are now ready to proof our first main result.
\begin{proof}[Proof of Theorem \ref{thm:C^1/2}.]
  We apply the monotonicity formula in Lemma
  \ref{lem:half-monotonicity} to the minimizer $u$ of $J$ to obtain
  \begin{equation} \label{eq:fbound} \phi(r) \leq \phi( 3/4 ) \leq C
    \| u \|_{L^2(B_1)}^2.
  \end{equation}
  Here, the last inequality is standard for non-negative subharmonic
  functions (for a proof see for example \cite{Ca2}). Applying this
  for $u_{\pm}$ we obtain the corresponding inequality for $u$.

  Now using the fact that $u$ vanishes on $ B_1' \cap \left \{x_1 \leq
    0\ \right \}$ we also have the Poincar\'e inequality for the
  halfball

  \begin{equation} \label{eq:fbound1} \int_{B_r^+} u^2 \leq C_n
    r^2\int_{B_r^+} |\nabla u|^2.
  \end{equation}
  Then by the scaling of Lemma~\ref{lem:loc-bound} we have
  \begin{equation} \label{eq:sup_u}
    \begin{aligned}
      \sup_{B_{r/2}}{|u|} &\leq C_n r^{-\frac{n}{2}} \| u\|_{L^2(B_r)}\leq C_n r^{1-\frac{n}{2}}  \| \nabla u \|_{L^2(B_r^+)}\\
      & \leq C_n r^{\frac{1}2} \left(\frac 1 r \int_{B_r^+}
        \frac{|\nabla u|^2}{|x|^{n-2}}\d x\right)^{1/2} \leq C_n
      r^{\frac{1}2} \phi(r)^{1/2} \leq C_n r^{\frac{1}2} \| u
      \|_{L^2(B_1)}.
    \end{aligned}
  \end{equation}
  Let us notice that the above estimate holds also for any ball
  $B_{r/2}(x)$ with a center $x\in B'_{1/2} \cap \{x_1 \leq 0\}$, and
  $r\leq 1/4$
  \begin{equation} \label{eq:distLinfty} \sup_{B_{r/2}(x)}{|u|} \leq
    C_n r^{\frac{1}2} \| u \|_{L^2(B_1)} \leq C_n r^{\frac{1}2} \| u
    \|_{L^\infty (B_1)}
  \end{equation}
  yielding
  \begin{equation}
    |u|  \leq C \dist (x, B'_1\cap \{ x_1 \leq 0 \} )^{1/2}. 
  \end{equation}
  Using Lemma \ref{lem:holder-reg} we obtain $u \in C^{1/2}(B_{1/4})$.
\end{proof}

\begin{remark}
  Without loss of generality we will further assume that $u\in
  C^{1/2}(B_1)$.
\end{remark}

\section{Monotonicity of the Frequency}
\label{sec:almgr-freq-form}
\subsection{Almgren's Frequency Formula}
\label{sub:amgren_s_frequency_formula}

As we mentioned in the introduction, Almgren's frequency formula plays
and important role in the Signorini problem. Since we have an
additional constraint for functions in $\K_0$, it is not automatic that it
will still be monotone. Fortunately, however, it is still the case.

\begin{theorem}[Monotonicity of the frequency]\label{thm:almgren}
  If $u$ is a minimizer of $J$ over $\K_0$, then
  \begin{equation} \label{eq:almgren} N(r)=N^{x_0}(r,u):= \frac{ r
      \int_{B_r(x_0)} |\nabla u |^2}{ \int_{\partial B_r(x_0)} u ^2}
  \end{equation}
  is monotone in $r$ for $r\in(0,R)$ and $x_0\in B_1'\cap \{ x_1 \geq
  0 \}$ such that $B_R(x_0)\subset B_1$. Moreover, $N^{x_0}(r,u)\equiv
  \kappa$ for all $0<r\leq R$ iff $u$ is homogeneous of degree
  $\kappa$ in $B_R(x_0)$, with respect to the center $x_0$.
\end{theorem}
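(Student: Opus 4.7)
My plan is to follow the classical Almgren strategy, reducing the monotonicity of $N$ to three ingredients about
\[
D(r):=\int_{B_r(x_0)} |\nabla u|^2,\qquad H(r):=\int_{\partial B_r(x_0)} u^2,
\]
namely
\begin{enumerate}[(i)]
\item $D(r)=\int_{\partial B_r(x_0)} u\,\partial_\nu u$,
\item $H'(r)=\frac{n-1}{r}H(r)+2D(r)$,
\item $D'(r)\ge \frac{n-2}{r}D(r)+2\int_{\partial B_r(x_0)}(\partial_\nu u)^2$,
\end{enumerate}
which, combined with Cauchy--Schwarz, finish the job.

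For (i), I would extend $u$ by even reflection in $x_n$ and integrate by parts; the required identity $u\Delta u\equiv 0$ as a distribution in $B_1$ holds because $\Delta u = 2(\partial_{x_n}^+u)\,\mathcal{H}^{n-1}\lfloor B_1'$, and on $B_1'$ either $u=0$ (where $x_1\le 0$) or $u\,\partial_{x_n}^+u=0$ (the Signorini complementarity, where $x_1>0$). Identity (ii) is the standard polar-coordinate differentiation of $H(r)=r^{n-1}\int_{\partial B_1} u(x_0+r\theta)^2\,d\sigma(\theta)$.

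The heart of the argument, and the main obstacle, is (iii), which I would derive from a one-sided inner variation. Take the field $Y(x):=\eta(|x-x_0|)(x-x_0)$ with $\eta$ a smooth approximation of $\chi_{[0,r]}$, and set $v_t(x):=u(x+tY(x))$. Two admissibility checks are crucial. First, $x_{0,n}=0$ gives $Y_n = x_n\,\eta$, which vanishes on $B_1'$, so the flow $\Phi_t$ preserves the hyperplane $\{x_n=0\}$. Second, the assumption $x_{0,1}\ge 0$ gives $Y_1(x)=(x_1-x_{0,1})\eta\le 0$ whenever $x_1\le 0$, so $\Phi_t(B_1'\cap\{x_1\le 0\})\subset B_1'\cap\{x_1\le 0\}$ \emph{for $t\ge 0$}. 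Consequently $v_t\in\K_0$ for $t\ge 0$ (though in general not for $t<0$), and minimality of $u$ yields the one-sided inequality $\frac{d}{dt}J(v_t)\big|_{t=0^+}\ge 0$. A Pohozaev-type expansion of this derivative, with $\eta\to\chi_{[0,r]}$ producing boundary terms on $\partial B_r(x_0)$, rearranges into exactly (iii); the sign points the correct way precisely because the admissible variations are one-sided. This is the ``Rellich-type identity which becomes an inequality in the correct direction'' mentioned in the introduction.

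Combining (i)--(iii),
\[
\frac{d}{dr}\log N(r)=\frac{1}{r}+\frac{D'(r)}{D(r)}-\frac{H'(r)}{H(r)}\ge\frac{2}{D(r)H(r)}\Bigl(H(r)\!\int_{\partial B_r(x_0)}(\partial_\nu u)^2 - D(r)^2\Bigr)\ge 0,
\]
the last step by Cauchy--Schwarz applied to (i). For the characterization, $N\equiv\kappa$ on $(0,R]$ forces equality in Cauchy--Schwarz at every $r$, so $\partial_\nu u = (\kappa/r)u$ on each $\partial B_r(x_0)$; integrating along radii yields $\kappa$-homogeneity of $u$ about $x_0$, and the converse is a direct computation. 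The chief technical care throughout is to justify these identities in a variational (not pointwise) sense, given that $u$ is only $C^{1/2}$ --- but the inner-variation approach works at the energy level and does not require a classical gradient on $B_1'$.
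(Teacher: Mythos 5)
Your proposal follows essentially the same route as the paper: the same three Almgren ingredients (first identity, $H'$ formula, Rellich-type \emph{inequality} via a one-sided inner variation justified by $x_{0,1}\ge 0$), combined by Cauchy--Schwarz, with the same characterization of the equality case. The only point where the paper is noticeably more careful is (i): rather than invoking the distributional formula for $\Delta u$ directly (delicate near the edge $\Pi$ because of the merely $C^{1/2}$ regularity), it proves $D(r)=\int_{\partial B_r(x_0)}uu_\nu$ by testing with $u\,\psi(d(x)/\epsilon)$ and using $C^{1/2}$ bounds to kill the error terms as $\epsilon\to 0$ --- a justification you gesture at but do not carry out.
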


The following notations will be used in the proof:
$$
D(r):=\int_{B_r(x_0)}|\nabla u |^2 \quad \text{and} \quad H(r):=
\int_{\partial B_r(x_0)} u ^2.
$$ 
Now if we consider the logarithm of $N(r)$ and formally differentiate
it, we obtain
$$
\frac{ N'(r) }{ N(r) }=(\log N(r))'= \frac{1}{r} + \frac{ D'(r) }{
  D(r) } - \frac{ H'(r) }{ H(r) }.
$$
In order to prove the theorem, we need to show that the right hand side is
nonnegative. We accomplish this by proving differentiation
formulas/inequalities in Lemmas~\ref{lem:first-id},
\ref{lem:hprime}, and \ref{lem:Rellich}, following similar proofs in \cite{GP} or \cite{ACS}.

\smallskip
We start with the following alternate formula for $D(r)$.

\begin{lemma}[First identity]\label{lem:first-id}
  For the minimizers $u$ of $J$ over $\K_0$, the following identity
  holds  for $B_r(x_0)\Subset B_1$ with $x_0\in B_1'$:
  \begin{equation} \label{eq:uunu} D(r)=\int_ {B_r(x_0)} |\nabla u|^2=
    \int_{\partial B_r(x_0)} u u_\nu.
  \end{equation}	
\end{lemma}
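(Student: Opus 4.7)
The plan is to bypass a direct pointwise integration by parts on the thin set and extract the identity from the variational inequality \eqref{eq:var-ineq} via a multiplicative perturbation, thereby sidestepping the reduced $C^{1/2}$ regularity near the fixed boundary $\Pi$.

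The key observation is that for any nonnegative Lipschitz function $\eta$ with compact support in $B_1$ and any sufficiently small $|t|$, the function $v_t := (1 + t\eta) u$ belongs to $\K_0$: it equals $g$ on $\partial B_1$ because $\eta$ vanishes there, it vanishes on $B_1' \cap \{x_1 \leq 0\}$ because $u$ does, and it is nonnegative on $B_1'$ because $u$ is nonnegative there and $1 + t\eta \geq 0$ whenever $|t|\,\|\eta\|_\infty \leq 1$. Testing \eqref{eq:var-ineq} against $v_t$ for both signs of $t$ and dividing by $t$ produces
\[
\int_{B_1} \nabla u \cdot \nabla(\eta u) = 0,\qquad\text{i.e.,}\qquad
\int_{B_1} \eta\,|\nabla u|^2 = -\int_{B_1} u\,\nabla u \cdot \nabla \eta.
\]
Approximation extends this identity from $C_c^\infty$ to any compactly supported nonnegative Lipschitz $\eta$.

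To conclude, I would localize with a radial Lipschitz cutoff $\eta_\delta$ equal to $1$ on $[0,r]$, zero beyond $r+\delta$, and linear in between, so that $\nabla \eta_\delta = -\delta^{-1}(x-x_0)/|x-x_0|$ on the shell $\{r < |x-x_0| < r+\delta\}$. Since $B_r(x_0)\Subset B_1$, this is admissible for small $\delta$. As $\delta \to 0$, the left side converges to $D(r) = \int_{B_r(x_0)}|\nabla u|^2$ by dominated convergence, while the right side equals the $\delta$-average $\frac{1}{\delta}\int_r^{r+\delta}\int_{\partial B_\rho(x_0)} u\,u_\nu\,d\sigma\,d\rho$, which by Lebesgue differentiation converges to $\int_{\partial B_r(x_0)} u\,u_\nu$ at almost every $r$ (the inner integral is $L^1$ in $\rho$ because $u \in L^\infty$ and $\nabla u \in L^2$); continuity of $D(r)$ then upgrades the identity to every admissible $r$. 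I do not anticipate a real obstacle here: the only place the specific problem structure enters is the admissibility check for $v_t$, which uses precisely the two defining features of $\K_0$, and because no pointwise boundary trace on the thin set is ever invoked the low regularity near $\Pi$ is irrelevant.
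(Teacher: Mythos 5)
Your approach is correct but genuinely different from the paper's. The paper proves \eqref{eq:uunu} by an integration by parts (their \eqref{eq:approx*}) against the cutoff test function $\eta_\epsilon = u\,\psi(d(x)/\epsilon)$, which vanishes near the slit $B_1'\cap\{x_1\le 0\}$; the passage $\epsilon\to 0$ produces error terms on the collar $\{d\le 2\epsilon\}$ that are controlled precisely by the $C^{1/2}$ regularity from Theorem~\ref{thm:C^1/2} (the estimate $u^2\le C\epsilon$ on the collar is what kills the term $\frac{M}{\epsilon}\int_{\{d\le 2\epsilon\}}u|\nabla u|$). You instead exploit the variational structure directly: the competitor $(1+t\eta)u$ is admissible in $\K_0$ for both signs of $t$, so the first variation $\int\nabla u\cdot\nabla(\eta u)=0$ comes for free, with no boundary terms to control and no quantitative regularity required beyond $u\in W^{1,2}$. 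This is cleaner and more robust, and it is of the same spirit as the perturbation argument used in the paper's Lemma~\ref{lem:subharmonicity}, just with a different choice of perturbation. One small point deserves tightening: the localization via $\eta_\delta$ and Lebesgue differentiation gives \eqref{eq:uunu} for a.e.\ $r$, and you then invoke ``continuity of $D(r)$'' to get it for all $r$. That step is incomplete as stated --- continuity of the left side does not by itself force equality everywhere; you also need the right-hand side $r\mapsto\int_{\partial B_r(x_0)}u\,u_\nu$ to be continuous (which does hold, via interior gradient estimates showing $u\,u_\nu$ is bounded near $\Pi$ and continuous elsewhere, but that needs a sentence). In practice the a.e.\ version is all that Theorem~\ref{thm:almgren} consumes, since monotonicity of $N$ only requires $N'\ge 0$ a.e.\ together with absolute continuity, so the gap is cosmetic relative to the paper's stated lemma rather than fatal to the argument.
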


\begin{proof}\label{pf:first}
  To prove the lemma we note that for any test function $\eta \in
  W^{1,2}(B_r(x_0))$ which vanishes in a neighborhood of $B_1' \cap \{
  x_1 \leq 0 \}$ then we have
  \begin{equation} \label{eq:approx*} \int_{B_r^+(x_0)} \nabla u
    \nabla \eta = \int_ {B_r'(x_0)} u_\nu \eta + \int_ {(\partial
      B_r(x_0))^+} u_\nu \eta.
  \end{equation}	
  For a small $\epsilon>0$, choose $\eta_\epsilon(x) =u \psi (
  d(x)/\epsilon)$, where $d(x)=\dist\left(x, B_1' \cap \left\{ x_1
      \leq 0 \right \} \right)$ and $\psi\in C^\infty(\R)$ is such
  that
  \begin{gather*}
    \psi=0\quad\text{in }(-\infty,1),\quad 0\leq\psi\leq1\quad\text{in
    }(1,2),\quad \psi=1\quad\text{in }(2,\infty),\\ 0\leq \psi'\leq
    M\quad\text{in }(-\infty,\infty).
  \end{gather*}
  We want to plug $\eta=\eta_\epsilon$ into \eqref{eq:approx*} and let
  $\epsilon\to 0$. We first claim that
  \begin{equation} \label{eq:approx} \lim_{\epsilon \to 0
    }\int_{B_r^+(x_0)} \nabla u \nabla \eta_\epsilon =
    \int_{B_r^+(x_0)} |\nabla u |^2,
  \end{equation}
  which is the same as
  \begin{equation} \label{eq:approx1} \lim_{\epsilon \to
      0}\int_{B_r^+(x_0)} \nabla u (\nabla \eta_\epsilon - \nabla u)
    =0.
  \end{equation}
  Indeed
  \begin{align*}
    \nabla \eta_\epsilon &=  \psi\left(\frac{ d }{ \epsilon  }\right) \nabla  u + u \psi'\left(\frac{ d }{ \epsilon  }\right)\frac{ \nabla  d}{ \epsilon  },\\
    \nabla \eta_\epsilon - \nabla u &= \left( \psi\left(\frac{ d }{
          \epsilon }\right) -1 \right) \nabla u +\frac{u}{ \epsilon }
    \psi'\left(\frac{ d }{ \epsilon }\right)\nabla d.
  \end{align*}
  Multiplying both sides of the above by $\nabla u$ and integrating
  over $B_1$, we obtain
  \begin{equation} \label{eq:approx2} \left|\int_{B_r^+(x_0)}
      \nabla u (\nabla \eta_\epsilon - \nabla u)\right| \leq \int_{
      \{d\leq 2\epsilon \} } |\nabla u|^2 + \frac{ M }{ \epsilon }
    \int_{ \{d\leq 2\epsilon \} } u|\nabla u|,
  \end{equation}
  using that $|\psi'|\leq M$ and $|\nabla d|\leq1$.  Since the first
  integral on the right hand side goes to $0$ as $\epsilon \to 0$, it
  remains only to estimate the second one. We have
  \begin{equation} \label{eq:approx3} \frac{ M }{ \epsilon } \int_{
      \{d\leq 2\epsilon \} } u|\nabla u| \leq \left( \int_{ \{d\leq
        2\epsilon \} } |\nabla u|^2 \right)^{1/2} \frac{ M }{ \epsilon
    } \left( \int_{ \{d\leq 2\epsilon \} } u^2 \right)^{1/2}.
  \end{equation}
  Again the first integral goes to $0$, and to estimate the second one
  we use the $C^{1/2}$ regularity of $u$ to obtain
	$$u^2 \leq C\epsilon\quad  \text{in } \{d\leq 2\epsilon \}.$$ 
        Besides, we also have that $|\{d\leq 2\epsilon \} | \leq C
        \epsilon $, which gives
$$
\frac{1}{\epsilon}\left(\int_{\{d\leq 2\epsilon\}}u^2 \right)^{1/2}
\leq C
$$
and establishes \eqref{eq:approx}. Now, to complete the proof of the
lemma, we let $\eta=\eta_\epsilon$ in \eqref{eq:approx*} and pass to
the limit as $\epsilon\to 0$.  Using the fact that
$$
u_\nu \eta_\epsilon = u_\nu u \psi =0 \quad \text{on } B_1'
$$
we obtain \eqref{eq:uunu}.
\end{proof}

\begin{lemma}[Second identity]\label{lem:hprime}
  For the minimizer $u$ of $J$ over $\K_0$ the following identity
  holds for $B_r(x_0)\Subset B_1$ with $x_0\in B_1'$:
  \begin{equation} \label{eq:Hpr} H'(r) = \frac{ n-1 }{ r }H(r) + 2
    \int_{\partial B_r(x_0)} u u_\nu.
  \end{equation}	
\end{lemma}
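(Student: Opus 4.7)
The identity is a purely kinematic one-variable calculation, essentially the same as the analogous statements in \cite{GP} and \cite{ACS}, and the plan is to perform it by spherical rescaling. Parameterize $\partial B_r(x_0)$ by $x=x_0+r\omega$ with $\omega\in\partial B_1$, which gives
\[
H(r)=r^{n-1}\int_{\partial B_1} u(x_0+r\omega)^2\,d\sigma(\omega).
\]
Differentiating in $r$ produces two terms: the Jacobian term $(n-1)r^{n-2}\int_{\partial B_1}u(x_0+r\omega)^2\,d\sigma$, which after returning to $\partial B_r(x_0)$ becomes $\tfrac{n-1}{r}H(r)$, and the interior term $2r^{n-1}\int_{\partial B_1}u(x_0+r\omega)\,(\nabla u(x_0+r\omega)\cdot\omega)\,d\sigma$, which rescales back to $2\int_{\partial B_r(x_0)} u u_\nu$ since $\omega$ is the outward unit normal to $\partial B_r(x_0)$. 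This yields \eqref{eq:Hpr}.

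The only nonroutine point is justifying the differentiation under the integral, because the center $x_0$ lies on $B_1'$, so the spheres $\partial B_r(x_0)$ cross the thin set $B_1'$ where $u$ is only known to be $C^{1/2}$ rather than $C^1$. To handle this I would observe that $u\in W^{1,2}_{\loc}(B_1)$ and is harmonic, hence real-analytic, in $B_1\setminus B_1'$; by Fubini applied to the spherical decomposition of a neighborhood of $B_r(x_0)$, the map $\omega\mapsto u(x_0+r\omega)$ lies in $W^{1,2}(\partial B_1)$ for a.e. $r$, and the chain-rule/integration-by-parts identities hold in this Sobolev sense. In addition, the set $\partial B_r(x_0)\cap B_1'$ has $(n-1)$-dimensional measure zero in $\partial B_r(x_0)$, so the restricted integrals are unaffected by the thin lower-regularity set.

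Combining these, the rescaling computation is valid for a.e.\ $r\in(0,R)$, and since both $H(r)$ and the right-hand side of \eqref{eq:Hpr} are continuous in $r$ (even absolutely continuous, via the dominated-convergence argument applied to the spherical integrand), the identity extends to every such $r$. The hardest part is really just this regularity bookkeeping; once it is in place, the formula drops out of a one-line calculation.
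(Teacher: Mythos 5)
Your proof is correct, but it takes a genuinely different route from the paper's. You parameterize the sphere $\partial B_r(x_0)$ by $\partial B_1$, pull the integral back, and differentiate under the integral sign; this forces you to justify the radial differentiation of $\omega\mapsto u(x_0+r\omega)^2$, which you handle (somewhat loosely) via the ACL characterization of $W^{1,2}$ functions, Fubini, and the observation that the thin set $B_1'$ meets $\partial B_r(x_0)$ in a set of surface measure zero. The paper instead notes that $(x-x_0)\cdot\nu=0$ on the flat part $B_r'(x_0)$ (because $x_0\in B_1'$), so the divergence theorem applied to the vector field $(x-x_0)u^2$ on $B_r^+(x_0)$ yields the identity
\[
rH(r)=\int_{B_r(x_0)}\bigl(n\,u^2+2u\,(x-x_0)\cdot\nabla u\bigr),
\]
and then differentiates this \emph{volume} integral in $r$ using only the coarea/Lebesgue-differentiation fact that $\tfrac{d}{dr}\int_{B_r}g=\int_{\partial B_r}g$ for a.e.\ $r$ whenever $g\in L^1$. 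That formulation buys a cleaner justification: it needs only $u^2,\,u\nabla u\in L^1(B_1)$, which is immediate from $u\in W^{1,2}\cap L^\infty_{\loc}$, and it sidesteps the differentiation-under-the-integral-sign bookkeeping that is the real content of your argument. Your route works, but you should be a bit more careful about which direction of Fubini you invoke: the statement that $\omega\mapsto u(x_0+r\omega)$ lies in $W^{1,2}(\partial B_1)$ for a.e.\ $r$ concerns tangential Sobolev regularity (needed to make sense of the surface integrals), while what you actually use to differentiate $H$ is the \emph{radial} Fubini slicing, i.e.\ that for a.e.\ $\omega$ the map $r\mapsto u(x_0+r\omega)$ is absolutely continuous with derivative $\nabla u(x_0+r\omega)\cdot\omega$, together with joint $L^1$-integrability of $u\,\nabla u\cdot\omega$. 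Stating that explicitly would close the only soft spot in the argument.
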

The differentiation formula should be understood in the sense that
$H(r)$ is an absolutely continuous function of $r$ and that the
differentiation formula holds for a.e. $r$.

\begin{proof}\label{pf:hprime}
  We have
  \begin{align*}
    H(r)&= 2\int_{(\partial B_r(x_0))^+} u^2 = 2\int_{(\partial
      B_r(x_0))^+} \left(\frac{ x-x_0 }{ r } \nu \ u^2 \right)\\&
    = \frac{ 2 }{ r } \int_{B_r^+(x_0)}\div((x-x_0) u^2)\\
    &= \frac{ 1 }{ r } \int_{B_r(x_0)}\div(x-x_0) u^2 + \frac{ 2 }{ r
    } \int_{B_r(x_0)}(x-x_0)(\nabla u) u.
  \end{align*}
  Hence, we obtain
$$
H'(r)= \frac{ n }{ r } \int_{\partial B_r(x_0)} u^2 + \frac{ 2 }{ r }
\int_{\partial B_r(x_0)} (x-x_0)(\nabla u)u - \frac{ 1 }{ r } H(r),
$$
which yields the desired identity.
\end{proof}

While the above two identifies were the same as in the Signorini
problem, the third one becomes actually an inequality, which suffices
for our purposes. 
\begin{lemma}[Third (Rellich-type) inequality]\label{lem:Rellich}
  For the minimizer $u$ of $J$ over $\K_0$ the following inequality
  holds for $B_r(x_0)\Subset B_1$ with $x_0\in B_1'\cap\{x_1\geq 0\}$:
  \begin{equation} \label{eq:Dprime} D'(r) \geq \frac{ n-2 }{ r } D(r)
    + 2 \int_{\partial B_r(x_0)} u_\nu ^2
  \end{equation}
  or, equivalently,
  \begin{equation} \label{eq:rayleigh} r\int_{\partial
      B_r(x_0)}|\nabla u |^2 \geq \int_{B_r(x_0)}(n-2) |\nabla u |^2 +
    2r \int_{\partial B_r(x_0)} u_\nu ^2.
  \end{equation}
\end{lemma}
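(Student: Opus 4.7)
The plan is to derive the inequality by an inner (domain) variation argument, exploiting the fact that the admissibility class $\K_0$ is preserved only under one-sided perturbations relative to the clamped set. Let $\zeta\in C_c^\infty(B_r(x_0))$ be a nonnegative cutoff and set $Y(x)=\zeta(x)(x-x_0)$. Consider the family of diffeomorphisms $\phi_t(x)=x+tY(x)$, $t\in[0,t_0)$, and the competitors $v_t:=u\circ\phi_t$. Since $Y_n=\zeta(x)\,x_n$ vanishes on $B_1'$, $\phi_t$ preserves $B_1'$, so $v_t\geq 0$ on $B_1'$ follows from $u\geq 0$. The decisive admissibility check is for the clamping set: for $x\in B_1'\cap\{x_1\leq 0\}$,
\[
[\phi_t(x)]_1=x_1+t\zeta(x)(x_1-(x_0)_1)\leq x_1\leq 0,
\]
because $t\geq 0$, $\zeta\geq 0$, and $x_1-(x_0)_1\leq 0$ by the hypothesis $x_0\in\{x_1\geq 0\}$. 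Hence $\phi_t$ maps the clamped set into itself and $v_t\in\K_0$ for $t\in[0,t_0)$. The fact that this argument fails for $t<0$ is precisely what produces the one-sided variational inequality and forces $\geq$ in place of $=$ in the conclusion.

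Next, the standard inner variation formula (derived by the change of variables $y=\phi_t(x)$ and differentiating at $t=0$, valid for any $u\in W^{1,2}$) gives
\[
\left.\frac{d}{dt}\right|_{t=0}J(u\circ\phi_t)=\int_{B_1}\Bigl[\sum_{i,j} u_iu_j\,\partial_jY_i-\tfrac12|\nabla u|^2\div Y\Bigr]\d x.
\]
Minimality of $u$ together with the one-sided admissibility yields that this right-side is $\geq 0$. Substituting $Y_i=\zeta(x)(x-x_0)_i$, so that $\sum u_iu_j\partial_jY_i=(y\cdot\nabla u)(\nabla\zeta\cdot\nabla u)+\zeta|\nabla u|^2$ and $\div Y=y\cdot\nabla\zeta+n\zeta$ with $y=x-x_0$, the inequality becomes
\[
\int\Bigl[(y\cdot\nabla u)(\nabla u\cdot\nabla\zeta)-\tfrac12(y\cdot\nabla\zeta)|\nabla u|^2+\bigl(1-\tfrac{n}{2}\bigr)\zeta|\nabla u|^2\Bigr]\d x\geq 0.
\]

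Finally, I would specialize $\zeta(x)=\zeta_\epsilon(|x-x_0|)$, where $\zeta_\epsilon$ approximates $\chi_{[0,r)}$ from below, so that $\nabla\zeta_\epsilon$ concentrates on $\partial B_r(x_0)$. Using $\nabla u\cdot\nabla\zeta_\epsilon=\zeta_\epsilon'(|y|)u_\nu$ and $y\cdot\nabla\zeta_\epsilon=|y|\zeta_\epsilon'(|y|)$, a coarea computation gives in the limit $\epsilon\to 0$
\[
-r\int_{\partial B_r(x_0)}u_\nu^2+\tfrac{r}{2}\int_{\partial B_r(x_0)}|\nabla u|^2+\bigl(1-\tfrac{n}{2}\bigr)D(r)\geq 0,
\]
which, after multiplication by $2$ and rearrangement, is exactly \eqref{eq:rayleigh}; the equivalent form \eqref{eq:Dprime} follows from $D'(r)=\int_{\partial B_r(x_0)}|\nabla u|^2$. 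The main obstacle is the admissibility check in the first paragraph, which is where the geometric hypothesis $x_0\in\{x_1\geq 0\}$ is essential; the rest of the argument is a routine inner variation computation that, pleasantly, needs no more regularity on $u$ than $W^{1,2}$, which is fortunate in view of the only $C^{1/2}$ regularity available from Theorem \ref{thm:C^1/2}.
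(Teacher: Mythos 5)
Your proof is correct and follows essentially the same approach as the paper's: a one-sided inner (domain) variation $u\circ\phi_t$ with $\phi_t=\mathrm{id}+tY$, $Y$ a radial vector field supported in $B_r(x_0)$, where the restriction to $t\geq 0$ (forced by the admissibility check you carry out, which is where $x_0\in\{x_1\geq 0\}$ enters) turns the usual Rellich--Pohozaev identity into an inequality. The only cosmetic difference is that the paper works from the start with the piecewise-linear radial cutoff $\eta_k(y)=\max\{0,\min\{1,(r-|y|)/k\}\}$ and then sends $k\to\infty$, whereas you first write the first-variation inequality for a general smooth nonnegative cutoff $\zeta$ and then specialize to a radial approximation of $\chi_{B_r(x_0)}$; the content is the same.
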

We explicitly observe that the center $x_0$ of the ball
$B_r(x_0)$ must be in the upper thin halfball $B_1'\cap \{x_1\geq 0\}$
for the inequality to hold.
\begin{proof}\label{pf:Relich}
  The proof of this lemma uses the domain variation in radial
  direction similar to the one in \cite[p.~444]{We}. The main
  difference is that our constraints allow us to make perturbations
  that increase the distance from the origin, thus yielding an
  inequality (with the correct sign) instead of the equality in the
  non-constrained case.  We consider the function
		$$
                \eta_k(y):=\max \Big\{ 0, \min \Big\{ 1,\ \frac{ r -
                  |y|}{ k } \Big \} \Big \}.
		$$
                Then for $\epsilon> 0$, we have
$$
u_\epsilon(x)=u(x+\epsilon \eta_k(x-x_0)(x-x_0)) \in \K_0.
$$
Note that the same will not be true for negative $\epsilon$ (which is
why we only have an inequality), that variation will bring over the
zero values of $u$ from $B_1'\cup \{ x_1\leq 0 \}$ into $B_1'\cup \{
x_1 > 0 \}$, rendering the variation not an admissible function. Once
we established the admissibility of $u_\epsilon$, we can translate
$x_0$ into the origin and continue the rest of the proof for balls
centered at the origin.

Using the minimality of $u$, we have
$$
0\geq \frac{J(u)-J(u_\epsilon)}{\epsilon}=\frac{J(u(x))- J(u(
  x+\epsilon \eta_k(x)x))}{ \epsilon }.
$$
Letting $\epsilon\to 0$ this gives
\begin{alignat*}{2}
  0 & \geq \int_{B_r} \left(   |\nabla  u|^2 \div(\eta_k(x)x)- 2\nabla  u D(\eta_k(x)x)\nabla  u \right) \\
  & = \int_{B_r} \left((n-2) |\nabla u|^2 \eta_k(x) + |\nabla u|^2 x
    \nabla \eta_k(x) - 2 (x \nabla u ) (\nabla u \nabla \eta_k(x))
  \right).
\end{alignat*}
Sending this time $k\to\infty$, we obtain
\begin{alignat*}{2}
  0 & \geq \int_{B_r} (n-2) |\nabla u|^2 - \int_{\partial B_r} \left(
    |\nabla u|^2 x \nu + 2 (x\nabla u ) (\nu \nabla u ) \right),
\end{alignat*}
which is equivalent to \eqref{eq:rayleigh}.
\end{proof}

We can now prove the monotonicity of Almgren's frequency.

\begin{proof}[Proof of Theorem~\ref{thm:almgren}]\label{pf:almgren}
  The three lemmas proved above imply
$$
\frac{ N'(r) }{ N(r) }\geq \frac{1}{r} + \frac{n-2}{r} - \frac{n-1}{r}
+ 2\left( \frac{ \int_{\partial B_r(x)} u_\nu ^2}{ \int_{\partial
      B_r(x)} u u_\nu} - \frac{ \int_{\partial B_r(x)} u u_\nu }{
    \int_{\partial B_r(x)} u ^2 } \right)\geq 0.
$$
The last inequality follows from the Cauchy-Schwartz inequality, the
equality case of which has to be satisfied if $N'(r)=0$ and provides
that $u$ is homogeneous (see \cite{GP} or \cite{ACS}). From the
scaling properties of $N(r,u)$ we can also see that it is constant
when the function $u$ is homogeneous, thus the theorem is proved.
\end{proof}

\section{Blowups and Possible Homogeneities}

\subsection{Blowups}
\label{sub:blow_ups}
An important tool for us will be the following rescaling of the
minimizers at some points $x_0\in \Gamma(u)$:
\begin{equation} \label{eq:rescaling} u_r(x)= u_{x_0,r}(x):= \frac{
    u(rx+x_0) }{ \left(\frac{ 1 }{ r^{n-1} } \int_{\partial B_r(x_0)}
      u^2 \right)^{1/2} }.
\end{equation}
The limits of the rescaled functions $\{ u_r\} $ as $r=r_j\to 0+$ will
be called \emph{blowups} of $u$ at the point $x_0$. The above
definition normalizes the $L^2(\partial B_1)$ norm of the rescaled
functions to be one:
\begin{equation} \label{eq:normu_r} \int_{\partial B_1} u_r^2=1.
\end{equation}
Another useful property is the following identity
\begin{equation} \label{eq:Nu_r} N(\rho , u_r) = N^{x_0}(\rho r, u).
\end{equation}
We next want to let $r=r_j\to 0$ and study the convergence of the
rescaled functions $u_{r_j}$. We start by showing that such
convergence will be strong in $W^{1,2}$.

\begin{lemma}[Strong convergence]\label{lem:convergence of rescalings}
  Let $u_j$ be a minimizer of $J$ over $\K_0(g_j)$ with some $g_j\in
  L^2((\partial B_1)^+)$.  Let also $\| u_j\|_{W^{1,2}(B_1)} \leq C $,
  and $u_j \rightharpoonup u_0 $ weakly in $W^{1,2}(B_1)$ and $u_j\to
  u_0$ in $C^\alpha_\loc (B_1) $. Then $u_j \to u_0 $ strongly in
  $W^{1,2}_{\loc}(B_1)$:
  \begin{equation} \label{eq:stong convergence} \int_{B_\rho} |\nabla
    u_j|^2 \to \int_{B_\rho} |\nabla u_0|^2 \quad \text{ for all }\ 0<
    \rho <1.
  \end{equation}
  Moreover $u_0$ minimizes $J$ over $\K_0(g_0)$ with boundary values
  $g_0=\lim_{j\to \infty}{g_j}$.
\end{lemma}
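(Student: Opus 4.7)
The plan is a standard cutoff--competitor argument based on the minimality of $u_j$. For the strong convergence, fix $0 < \rho < \rho' < 1$ and pick $\eta \in C_c^\infty(B_{\rho'})$ with $\eta \equiv 1$ on $B_\rho$ and $0 \leq \eta \leq 1$, taking $\eta$ radial so that the symmetrized setup is preserved. The competitor is
\[
v_j := \eta\, u_0 + (1-\eta)\, u_j.
\]
Then $v_j \in \K_0(g_j)$ by a three--point check: on $(\partial B_1)^+$ we have $\eta = 0$, so $v_j = u_j = g_j$; on $B_1' \cap \{x_1 \leq 0\}$ both $u_0$ and $u_j$ vanish; and on $B_1' \cap \{x_1 > 0\}$, $v_j$ is a convex combination of the nonnegative functions $u_j$ and $u_0$, hence nonnegative. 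The variational inequality for $u_j$, evaluated at $v_j - u_j = \eta(u_0 - u_j)$, rearranges to
\[
\int_{B_1} \eta\, |\nabla u_j|^2 \leq \int_{B_1} \eta\, \nabla u_j \cdot \nabla u_0 + \int_{B_1} (u_0 - u_j)\, \nabla u_j \cdot \nabla \eta.
\]

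The second term on the right tends to $0$ by the locally uniform convergence $u_j \to u_0$ on the compact set $\supp \nabla \eta \Subset B_1$ combined with the uniform $L^2$ bound on $\nabla u_j$; the first term tends to $\int_{B_1} \eta |\nabla u_0|^2$ by the weak convergence $\nabla u_j \rightharpoonup \nabla u_0$. Consequently
\[
\limsup_{j\to\infty} \int_{B_\rho} |\nabla u_j|^2 \leq \limsup_{j\to\infty} \int_{B_1} \eta\, |\nabla u_j|^2 \leq \int_{B_{\rho'}} |\nabla u_0|^2,
\]
and sending $\rho' \searrow \rho$ gives $\limsup_j \int_{B_\rho} |\nabla u_j|^2 \leq \int_{B_\rho} |\nabla u_0|^2$. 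Combined with the weak lower semicontinuity $\int_{B_\rho} |\nabla u_0|^2 \leq \liminf_j \int_{B_\rho} |\nabla u_j|^2$, this gives the energy convergence \eqref{eq:stong convergence}, which together with the $L^2_\loc$ convergence upgrades to strong $W^{1,2}_\loc$ convergence.

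For the minimality of $u_0$ I would run the mirror construction: given any $v \in \K_0(g_0)$, the function $w_j := \eta\, v + (1-\eta)\, u_j$ is again admissible in $\K_0(g_j)$ by the same admissibility check. Minimality $J(u_j) \leq J(w_j)$, together with the strong convergence just proved, passes to the limit to yield $J(u_0) \leq J(\eta\, v + (1-\eta) u_0)$; letting $\rho' \nearrow 1$ and using that $v$ and $u_0$ share the boundary trace $g_0$ then gives $J(u_0) \leq J(v)$. The main technical obstacle is the boundary--layer estimate at this last step, namely showing $\|(v - u_0)\nabla\eta\|_{L^2} \to 0$ as $\rho' \nearrow 1$; this is handled by a Hardy--type inequality applied to $v - u_0$, which has vanishing trace on $(\partial B_1)^+$. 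Alternatively, since in the blowup analysis what is really needed is the variational inequality tested against compactly supported admissible perturbations, the interior argument above already suffices, and no boundary--layer control is needed at all.
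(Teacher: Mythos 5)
Your argument is correct, but it follows a genuinely different route from the paper. The paper's proof exploits the specific Signorini structure: it first shows that for any two minimizers $u_1,u_2$ of $J$ over $\K_0$ (with possibly different boundary data), the functions $(u_2-u_1)_{\pm}$ are subharmonic in $B_1$, by checking the sign of $\partial_{x_n}(u_2-u_1)$ on the set $\{u_2>u_1\}\cap B_1'$. Strong $W^{1,2}_{\loc}$ convergence then follows from the Caccioppoli inequality for the subharmonic functions $(u_j-u_0)_\pm$, and minimality of $u_0$ is obtained by a maximum-principle bound comparing $u_j$ with the actual minimizer $\hat u_0$ over $\K_0(g_0)$. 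Your cutoff--competitor argument $v_j=\eta u_0+(1-\eta)u_j$ instead exploits only the convexity of $J$ and $\K_0$ together with the variational inequality, which makes it more elementary and more portable to other convex constrained problems, at the cost of needing the boundary-layer control you flag at the end (the Hardy-type inequality for $v-u_0\in W^{1,2}_0$ as $\rho'\nearrow 1$). One small imprecision: the step ``$J(u_j)\leq J(w_j)$ passes to the limit'' is not a literal limit of both sides (neither converges separately, since $\nabla u_j\rightharpoonup\nabla u_0$ only weakly outside $\supp\eta$); rather, one should subtract the common contribution on $\{\eta=0\}$, where $w_j\equiv u_j$, so that the inequality reduces to an integral over $\{\eta>0\}\Subset B_1$ where the strong local convergence applies. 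After that correction the argument is complete. The paper's subharmonicity step yields additional pointwise information (a maximum-principle estimate in step 3) that your route does not provide, but for the purposes of this lemma both reach the same conclusion.
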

\begin{proof}\label{pf:lem-convergence of rescalings}

  1) We first prove that for any two solutions $u_1$ and $u_2$,
  $(u_2-u_1)_\pm$ are subharmonic:
  \begin{equation} \label{eq:diff-is-subhar} \int_{B_1} \nabla (u_2 -
    u_1)_\pm \nabla \eta \leq 0
  \end{equation}
  for all nonnegative test functions $\eta \in C_0^\infty(B_1)$.  We
  will show only the subharmonicity of $(u_2-u_1)_+$, the other one
  being analogous.  Now since the only complications can occur on
  $B_1'\cap \{ x_1 > 0 \} $, without loss of generality we may assume
  that $E=\{u_2>u_1\}\cap B_1'\subset B_1'\cap\{x_1>0\}$ is
  nonempty. Then from the Signorini conditions on $B_1'$ we have that
  \begin{equation*}
    \partial_{x_n} u_2 =0\quad\text{on }E,\quad
    \partial_{x_n} u_1 \leq 0\quad\text{on  }E,\quad
    \partial_{x_n} (u_2 - u_1) \geq 0\quad\text{on  }E.
  \end{equation*}
  For any point $x_0\in E$, let $\delta>0$ be such that $B'_\delta
  (x_0) \subset E$. Then from harmonicity of $u_2 -u_1$ in $B_1^\pm$,
  we have that for any test function $\eta \geq 0$, $\eta \in
  C_0^\infty(B_\delta (x_0))$,
$$
\int_{B_\delta(x_0)} \nabla (u_2 - u_1) \nabla \eta = 2
\int_{B_\delta^+(x_0)} \nabla (u_2 - u_1) \nabla \eta =-
\int_{B_\delta '(x_0)}\partial _{x_n} (u_2- u_1) \eta \leq 0.
$$ 
This implies the subharmonicity of $(u_2-u_1)_+$ in a neighborhood of
any point $x_0\in E$, implying the subharmonicity in $B_1$.

2) Take the sequence $\{ u_j \}$ and $u_0$ as in the statement of the
lemma. The previous step shows that $(u_j - u_k)_\pm$ are
subharmonic. Letting $k \to \infty$ we get $(u_j - u_0)_\pm$ is also
subharmonic. Now using the energy inequality we obtain
\begin{equation} \label{eq:uj-u0} \int_{B_\rho } |\nabla (u_j
  -u_0)_\pm |^2 \leq C(\rho ) \int_{B_1 } (u_j -u_0)_\pm ^2 \to 0
  \text{ as } j\to \infty,
\end{equation}
which implies the strong convergence in $B_\rho$.

3) Recall now that $u_j$ minimizes $J$ over $\K_0(g_j)$. Since $u_j$
are bounded in $W^{1,2}(B_1)$ and the trace mapping is compact, we can
take a subsequence such that $g_j\to{g_0}$ in $L^2(\partial B_1) $
as $j\to \infty$. Taking the minimizer $\hat u_0$ of $J$ on
$\K_0(g_0)$ and letting $u_0$ be the strong limit of $u_j$ obtained in
previous step and using that $(u_j -\hat u_0)_\pm$ is subharmonic, we
obtain
\begin{equation} \label{eq:uj-u0hat} \sup_{B_\rho}{|(u_j -\hat
    u_0)_\pm |} \leq C(\rho ) \int_{\partial B_1^+} (g_j -g_0)_\pm^2.
\end{equation}  
Thus $u_j$ converges uniformly to $\hat u_0$ on $B_\rho $ for any
$0<\rho <1$, meaning $\hat u_0 \equiv u_0$ in $B_1$.
\end{proof}

\subsection{Homogeneity of blowups}
\label{sub:homogeneity_of_blowups}

We next show that the blowups are homogeneous.

\begin{lemma}[Homogeneity of blowups]\label{lem:blowups are homog.} Let $u$ be a minimizer of
  $J$ over $\K_0$ and $u_0=\lim_{r_j \to 0}{u_{r_j}}$ to be a blowup
  of $u$ at $x_0\in \Gamma(u)$. Then $u_0$ is homogeneous of degree
  $\kappa=N^{x_0}(0+, u)$.
\end{lemma}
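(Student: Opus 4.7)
My plan is to deduce homogeneity from the monotonicity of the frequency via the scaling identity \eqref{eq:Nu_r} and the strong $W^{1,2}$ convergence established in Lemma~\ref{lem:convergence of rescalings}, and then invoke the equality case of Theorem~\ref{thm:almgren}.

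First, I would note that by Theorem~\ref{thm:almgren}, the function $r\mapsto N^{x_0}(r,u)$ is monotone nondecreasing on $(0,R)$, so $\kappa=N^{x_0}(0+,u)$ is well-defined and finite. Using the scaling identity $N(\rho,u_r)=N^{x_0}(\rho r,u)$ together with the $C^{1/2}$ bound on $u$ and the normalization $\int_{\partial B_1}u_r^2=1$, I would show that the rescalings $\{u_{r_j}\}$ are uniformly bounded in $W^{1,2}(B_\rho)$ for every $\rho>0$ (once $r_j$ is small enough that $\rho r_j<R$), simply because $D(\rho,u_{r_j})=N(\rho,u_{r_j})H(\rho,u_{r_j})/\rho$ and both factors on the right are controlled by $\kappa+1$ and by Almgren's identities combined with the Poincaré estimate. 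Extracting a subsequence, I can apply Lemma~\ref{lem:convergence of rescalings} to the sequence $\{u_{r_j}\}$ of minimizers (each is a minimizer of $J$ over the corresponding $\K_0(g_{r_j})$ after the rescaling) to conclude that $u_{r_j}\to u_0$ strongly in $W^{1,2}_{\loc}$, that $u_0\not\equiv 0$ (since $\int_{\partial B_1}u_0^2=1$ by trace compactness), and that $u_0$ itself is a minimizer of $J$ over some $\K_0(g_0)$.

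Next, I would pass to the limit in the frequency. Fix any $\rho>0$. For all $j$ large enough,
\[
N(\rho,u_{r_j})=N^{x_0}(\rho r_j,u)\longrightarrow \kappa
\]
since $\rho r_j\to 0+$. On the other hand, by the strong $W^{1,2}_{\loc}$ convergence $\int_{B_\rho}|\nabla u_{r_j}|^2\to\int_{B_\rho}|\nabla u_0|^2$, and by trace compactness $\int_{\partial B_\rho}u_{r_j}^2\to\int_{\partial B_\rho}u_0^2$. Provided the denominator does not vanish (which I would justify by combining the normalization with the uniform frequency bound to get a lower bound on $H(\rho,u_{r_j})$), this yields $N(\rho,u_0)=\kappa$ for every $\rho>0$.

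Finally, since $u_0$ is a minimizer of $J$ over $\K_0$ (after translation of $x_0$ to the origin, noting $x_0\in B_1'\cap\{x_1\geq 0\}$, which is the hypothesis required for Theorem~\ref{thm:almgren}) and $N(\rho,u_0)\equiv\kappa$, the equality case in Theorem~\ref{thm:almgren} gives that $u_0$ is homogeneous of degree $\kappa$, as desired. The main technical obstacle I anticipate is the uniform $W^{1,2}$ bound on the rescalings and the strictly positive lower bound on $H(\rho,u_{r_j})$ needed to legitimately pass to the limit in the ratio; both should follow from the normalization \eqref{eq:normu_r}, the monotonicity of $N$, and the fact that $H(r)/r^{n-1}$ is controlled via the differentiation formula \eqref{eq:Hpr} combined with the bound $N\leq\kappa+1$ for small $r$.
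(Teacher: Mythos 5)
Your proposal is essentially correct and follows the same overall architecture as the paper's proof: uniform $W^{1,2}$ bound on the rescalings via the normalization and the monotone frequency, application of Lemma~\ref{lem:convergence of rescalings} for strong convergence and non-triviality of the limit, passage to the limit in $N(\rho,\cdot)$ using \eqref{eq:Nu_r}, and then the rigidity (equality) case of Theorem~\ref{thm:almgren}. The one point where you take a genuinely different route is in ruling out $\int_{\partial B_\rho}u_0^2=0$: the paper argues qualitatively that if $H(\rho,u_0)=0$ then the subharmonic functions $(u_0)_\pm$ vanish in $B_\rho$ by the maximum principle, hence $u_0\equiv 0$ by harmonicity in $B_1^+$, contradicting trace compactness which forces $\int_{\partial B_1}u_0^2=1$. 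You instead propose a quantitative lower bound $H(\rho,u_{r_j})\gtrsim\rho^{n-1+2M}$ obtained by integrating the logarithmic derivative $\frac{d}{dr}\log(H(r)/r^{n-1})=2N(r)/r$ and using $N\leq M$; this is a valid alternative and has the advantage of being uniform in $j$, but it costs you an additional invocation of Lemma~\ref{lem:hprime} where the paper gets away with the softer subharmonicity argument. Also, your derivation of the $W^{1,2}$ bound via $D(\rho)=N(\rho)H(\rho)/\rho$ for every $\rho$ is more work than needed: once you note $\int_{\partial B_1}u_r^2=1$, the single identity $\int_{B_1}|\nabla u_r|^2=N(1,u_r)\leq N(1/2,u)$ already gives the full $W^{1,2}(B_1)$ bound, as the paper does.
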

\begin{proof}\label{pf:blowups are homog.}
  Indeed, using \eqref{eq:Nu_r} and Theorem \ref{thm:almgren} for a
  $0< r <1/2$ we obtain
$$
N(1,u_r)=N(r,u)\leq N(1/2,u)=:M.
$$	
Using \eqref{eq:normu_r} and the above estimate we arrive at
$$
\int_{B_1} |\nabla u_r|^2 = N(1,u_r) \leq M,
$$
which shows that the sequence $\{ u_r \} $ is bounded in
$W^{1,2}(B_1)$. Thus, we can choose a weakly converging subsequence
$u_{r_j}\rightharpoonup u_0$ in $W^{1,2}(B_1)$. From Lemma
\ref{pf:lem-convergence of rescalings} we also have the strong
convergence $u_{r_j}\to u_0$ in $W^{1,2}_{\loc}(B_1)$, which means in
particular that
\begin{equation} \label{eq:limN(r_j)} \lim_{r_j\to 0} {N(\rho,
    u_{r_j})} = N(\rho, u_0),
\end{equation}
provided $ \int_{\partial B_\rho} u_0^2 \neq 0$. Now suppose
$\int_{\partial B_\rho} u_0^2=0$. Then by the maximum principle we
would have that the subharmonic functions $(u_0)_\pm$ vanish in
$B_\rho $, and since $u_0$ is harmonic in $B_1^+$, we obtain that $u_0
\equiv 0$ in $B_1$. But due to compactness of trace mapping, we have
$$
\int_{\partial B_1} u_0^2 = \lim_{r_j \to 0}{\int_{\partial B_1}
  u_{r_j}^2d\sigma =1},
$$
which contradicts to $u_0$ vanishing in $B_1$. Thus
\eqref{eq:limN(r_j)} holds for any $0< \rho <1$. Moreover we can write
$$
N(\rho, u_0) = \lim_{r_j\to 0} {N(\rho, u_{r_j})} = \lim_{r_j\to 0}
{N^{x_0}(\rho r_j, u)} = N^{x_0}(0+, u) =:\kappa,
$$
yielding
\begin{equation} \label{eq:N(u_0)=k} N(\rho, u_0) \equiv \kappa \quad
  \text{for any } 0< \rho <1.
\end{equation}
Then using the last part of Theorem~\ref{thm:almgren}, we complete the
proof of the lemma.
\end{proof}

We can now proceed to the proof of Theorems~\ref{thm:contact points} and
      \ref{thm:noncontact}.
      \subsection{Minimal homogeneity at contact points}
      \label{sub:contact_and_nonconcat_points}
      \begin{proof}[Proof of Theorem~\ref{thm:contact
          points}]\label{pf:Ncontact}
	For a fixed $r>0$ consider a functional
	\begin{equation} \label{eq:almgren2} \Gamma(u)\ni x\mapsto
          N^{x}(r,u)= \frac{ r \int_{B_r( x)} |\nabla u |^2}{
            \int_{\partial B_r(x)} u ^2 } .
	\end{equation}
	Then, since for a fixed  $r>0$ the functional above is
        continuous and that $N^x(r,u)$ is
        nondecreasing in $r$, we obtain the upper semicontinuity of
        the functional $x\mapsto N^{x}(0+,u)$ on $\Gamma(u)$.  More
        precisely, we have
	\begin{equation} \label{eq:upper-semic} N^{x_0}(0+,u)\geq
          \limsup_{\substack{x\to x_0\\x\in\Gamma(u)}}
          N^x(0+,u).\end{equation} Now, for a contact point $\bar x\in
        \Gamma'(u)$ we have a sequence of free boundary points $x_j
        \in \Gamma(u)\cap\{x_1>0\}$ converging to $\bar x$. Now, near
        $x_j$, the minimizer $u$ solves the Signorini problem and
        therefore we have
$$
N^{x_j}(0+,u)\geq 3/2,\quad\text{for }x_j\in \Gamma(u)\cap\{x_1>0\}
$$
and thus, using the upper semicontinuity, we conclude that
\[
N^{\bar x}(0+,u)\geq 3/2.\qedhere
\]
\end{proof}

\subsection{Possible homogeneities at non-contact points}
\label{sub:noncontact_points}
\begin{proof}[Proof of
  Theorem~\ref{thm:noncontact}]\label{pf:noncontact}
  Since $\bar x$ is not a contact point, we know that there exists a
  positive $\delta $ such that $u$ is harmonic in ${B_\delta} (\bar
  x)\setminus (B'_\delta(\bar x)\cap\{x_1\leq 0\})$. Let $u_0$ be a
  blowup of $u$ at $\bar x$:
$$
u_0=\lim_{r_j \to 0} u_{\bar x,r_j}= \lim_{r_j \to0}\frac{ u(\bar x+
  r_jx) } { \left(\frac{ 1 }{ r_j^{n-1} } \int_{\partial B_{r_j}(\bar
      x)} u^2 d\sigma\right)^{1/2} }.
$$ 
We know that $u_0$ is homogeneous of degree $\kappa=\kappa(\bar
x):=N^{\bar x}(0+,u)$, meaning $u_0(r \theta )=r^\kappa u_0(\theta )$
for $r>0$ and $\theta\in\partial B_1$. We also know that $u_0$ is
harmonic in $\R^n\setminus (\R^{n-1}\cap\{x_1\leq 0\})$ and $u_0$ is
nonnegative in $\R^{n-1} \cap\{ x_1 >0 \}$.
Next, for $m\in\N$, define
\begin{equation} \label{eq:u_n} \hat u_{m-1/2}(x):=
  \Re(x_1+i|x_n|)^{m-1/2}.
\end{equation}
It is easy to see that $\hat u_{m-1/2}$ is homogeneous
of degree $(m-1/2)$ and
\begin{align*}
  \Delta \hat u_{m-1/2} =0 &\quad\text{in }
  \R^n\setminus(\R^{n-1}\cap\{x_1\leq 0\}) \\
  \hat u_{m-1/2}=0 &\quad \text{on } \R^{n-1} \cap\{ x_1\leq 0 \}.
\end{align*}
Thus, the set of possible values of $\kappa$ includes $\{m-1/2:m\in\N\}$.
We want to show that those are the only possible values of $\kappa$. This
fact will follow from the expansion of harmonic functions in slit domains,
recently established in \cite[Theorem~3.1]{DS0}. The latter theorem
implies that for any $k\geq 0$ there exists a polynomial $P_0(x,r)$ of
degree $k+1$ such that
$$
u_0(x)=\hat u_{1/2}(x)\left(P_0(x',r)+o(|x|^{k+1})\right),\quad
r=\sqrt{x_1^2+x_n^2},
$$
solely from the fact that $u_0$ is harmonic in $B_1\setminus B_1'\cap\{x_1\leq 0\}$, vanishes continuously on $B_1'\cap\{x_1\leq 0\}$ and is even in $x_n$. 
Taking $k>\kappa$ and using that $u_0$ is homogeneous of degree
$\kappa$, we obtain that
$$
u_0(x)=\hat u_{1/2}(x)P_0(x',r)
$$
for a homogeneous polynomial $P_0(x',r)$ of degree $\kappa-1/2$. Thus,
$\kappa=m-1/2$ for some $m\in\N$. The proof is complete.
\end{proof}

\begin{bibdiv}
\begin{biblist}

\bib{Alm}{book}{
   author={Almgren, Frederick J., Jr.},
   title={Almgren's big regularity paper},
   series={World Scientific Monograph Series in Mathematics},
   volume={1},
   note={$Q$-valued functions minimizing Dirichlet's integral and the
   regularity of area-minimizing rectifiable currents up to codimension 2;
   With a preface by Jean E.\ Taylor and Vladimir Scheffer},
   publisher={World Scientific Publishing Co., Inc., River Edge, NJ},
   date={2000},
   pages={xvi+955},
   isbn={981-02-4108-9},
   review={\MR{1777737 (2003d:49001)}},
}

\bib{AU}{article}{
   author={Apushkinskaya, D. E.},
   author={Ural{\cprime}tseva, N. N.},
   title={On the behavior of the free boundary near the boundary of the
   domain},
   language={Russian, with English and Russian summaries},
   journal={Zap. Nauchn. Sem. S.-Peterburg. Otdel. Mat. Inst. Steklov.
   (POMI)},
   volume={221},
   date={1995},
   number={Kraev. Zadachi Mat. Fiz. i Smezh. Voprosy Teor. Funktsii. 26},
   pages={5--19, 253},
   issn={0373-2703},
   translation={
      journal={J. Math. Sci. (New York)},
      volume={87},
      date={1997},
      number={2},
      pages={3267--3276},
      issn={1072-3374},
   },
   review={\MR{1359745 (96m:35340)}},
   doi={10.1007/BF02355579},
}

\bib{AC}{article}{
   author={Athanasopoulos, I.},
   author={Caffarelli, L. A.},
   title={Optimal regularity of lower dimensional obstacle problems},
   language={English, with English and Russian summaries},
   journal={Zap. Nauchn. Sem. S.-Peterburg. Otdel. Mat. Inst. Steklov.
   (POMI)},
   volume={310},
   date={2004},
   number={Kraev. Zadachi Mat. Fiz. i Smezh. Vopr. Teor. Funkts. 35
   [34]},
   pages={49--66, 226},
   issn={0373-2703},
   translation={
      journal={J. Math. Sci. (N. Y.)},
      volume={132},
      date={2006},
      number={3},
      pages={274--284},
      issn={1072-3374},
   },
   review={\MR{2120184 (2006i:35053)}},
   doi={10.1007/s10958-005-0496-1},
}

\bib{ACS}{article}{
   author={Athanasopoulos, I.},
   author={Caffarelli, L. A.},
   author={Salsa, S.},
   title={The structure of the free boundary for lower dimensional obstacle
   problems},
   journal={Amer. J. Math.},
   volume={130},
   date={2008},
   number={2},
   pages={485--498},
   issn={0002-9327},
   review={\MR{2405165 (2009g:35345)}},
   doi={10.1353/ajm.2008.0016},
}

\bib{Ca1}{article}{
   author={Caffarelli, L. A.},
   title={Further regularity for the Signorini problem},
   journal={Comm. Partial Differential Equations},
   volume={4},
   date={1979},
   number={9},
   pages={1067--1075},
   issn={0360-5302},
   review={\MR{542512 (80i:35058)}},
   doi={10.1080/03605307908820119},
}

\bib{Ca2}{article}{
   author={Caffarelli, L. A.},
   title={The obstacle problem revisited},
   journal={J. Fourier Anal. Appl.},
   volume={4},
   date={1998},
   number={4-5},
   pages={383--402},
   issn={1069-5869},
   review={\MR{1658612 (2000b:49004)}},
   doi={10.1007/BF02498216},
}

\bib{CSS}{article}{
   author={Caffarelli, Luis A.},
   author={Salsa, Sandro},
   author={Silvestre, Luis},
   title={Regularity estimates for the solution and the free boundary of the
   obstacle problem for the fractional Laplacian},
   journal={Invent. Math.},
   volume={171},
   date={2008},
   number={2},
   pages={425--461},
   issn={0020-9910},
   review={\MR{2367025 (2009g:35347)}},
   doi={10.1007/s00222-007-0086-6},
}

\bib{DS0}{article}{
  author={De Silva, Daniela},
  author={Savin, Ovidiu},
  title={$C^{\infty}$ regularity of certain thin free boundaries},
  status={preprint},
  date={2014},
  eprint={arXiv:1402.1098},
}

\bib{DS1}{article}{
  author={De Silva, Daniela},
  author={Savin, Ovidiu},
  title={Boundary Harnack estimates in slit domains and applications to thin free boundary problems},
  status={preprint},
  date={2014},
  eprint={arXiv:1406.6039},
}

\bib{GP}{article}{
   author={Garofalo, Nicola},
   author={Petrosyan, Arshak},
   title={Some new monotonicity formulas and the singular set in the lower
   dimensional obstacle problem},
   journal={Invent. Math.},
   volume={177},
   date={2009},
   number={2},
   pages={415--461},
   issn={0020-9910},
   review={\MR{2511747 (2010m:35574)}},
   doi={10.1007/s00222-009-0188-4},
}

\bib{KPS}{article}{
  author={Koch, Herbert},
  author={Petrosyan, Arshak},
  author={Shi, Wenhui},
  title={Higher regularity of the free boundary in the elliptic
    Signorini problem},
  status={preprint},
  date={2014},
  eprint={arXiv:1406.5011},
}

\bib{LS}{article}{
   author={Lions, J.-L.},
   author={Stampacchia, G.},
   title={Variational inequalities},
   journal={Comm. Pure Appl. Math.},
   volume={20},
   date={1967},
   pages={493--519},
   issn={0010-3640},
   review={\MR{0216344 (35 \#7178)}},
}

\bib{PSU}{book}{
   author={Petrosyan, Arshak},
   author={Shahgholian, Henrik},
   author={Uraltseva, Nina},
   title={Regularity of free boundaries in obstacle-type problems},
   series={Graduate Studies in Mathematics},
   volume={136},
   publisher={American Mathematical Society, Providence, RI},
   date={2012},
   pages={x+221},
   isbn={978-0-8218-8794-3},
   review={\MR{2962060}},
}

\bib{Ri}{book}{
   author={Richardson, David Joseph Allyn},
   title={Variational problems with thin obstacles},
   note={Thesis (Ph.D.)--The University of British Columbia (Canada)},
   publisher={ProQuest LLC, Ann Arbor, MI},
   date={1978},
   pages={(no paging)},
   review={\MR{2628343}},
}

\bib{U1}{article}{
   author={Ural{\cprime}tseva, N. N.},
   title={On the regularity of solutions of variational inequalities},
   language={Russian},
   journal={Uspekhi Mat. Nauk},
   volume={42},
   date={1987},
   number={6(258)},
   pages={151--174, 248},
   issn={0042-1316},
   review={\MR{933999 (90c:35033)}},
}

\bib{U2}{article}{
   author={Ural{\cprime}tseva, N. N.},
   title={$C\sp 1$ regularity of the boundary of a noncoincident set in a
   problem with an obstacle},
   language={Russian, with Russian summary},
   journal={Algebra i Analiz},
   volume={8},
   date={1996},
   number={2},
   pages={205--221},
   issn={0234-0852},
   translation={
      journal={St. Petersburg Math. J.},
      volume={8},
      date={1997},
      number={2},
      pages={341--353},
      issn={1061-0022},
   },
   review={\MR{1392033 (97m:35105)}},
}

\bib{We}{article}{
   author={Weiss, Georg S.},
   title={Partial regularity for weak solutions of an elliptic free boundary
   problem},
   journal={Comm. Partial Differential Equations},
   volume={23},
   date={1998},
   number={3-4},
   pages={439--455},
   issn={0360-5302},
   review={\MR{1620644 (99d:35188)}},
   doi={10.1080/03605309808821352},
}

\end{biblist}
\end{bibdiv}
  
\end{document}